\documentclass{article}
\usepackage{amsmath}
\usepackage{amsthm}
\usepackage{mathrsfs}
\usepackage{latexsym}
\usepackage{amssymb}
\usepackage{amscd}
\usepackage[dvips]{graphics}
\usepackage[all,cmtip]{xy}

\bibliographystyle{plain}

\DeclareSymbolFont{AMSb}{U}{msb}{m}{n}
\DeclareMathSymbol{\N}{\mathbin}{AMSb}{"4E}
\DeclareMathSymbol{\Z}{\mathbin}{AMSb}{"5A}
\DeclareMathSymbol{\R}{\mathbin}{AMSb}{"52}
\DeclareMathSymbol{\Q}{\mathbin}{AMSb}{"51}
\DeclareMathSymbol{\I}{\mathbin}{AMSb}{"49}
\DeclareMathSymbol{\C}{\mathbin}{AMSb}{"43}

\newcommand{\dbl}{[\hspace{-0.2ex}[}
\newcommand{\dbr}{]\hspace{-0.2ex}]}
\newcommand{\db}[1]{\dbl {#1} \dbr}
\newcommand{\res}[1]{\hspace{-0.6mm}\downarrow_{\hspace{-0.25mm}{#1}}}
\newcommand{\ind}[1]{\hspace{-0.6mm}\uparrow^{\hspace{-0.25mm}{#1}}}
\newcommand{\ctens}{\widehat{\otimes}}

\newcommand{\iso}{\cong}
\newcommand{\invlim}{\underleftarrow{\textnormal{lim}}\,}
\newcommand{\ds}{\raisebox{0.5pt}{\,\big|\,}}
\newcommand{\onto}{\twoheadrightarrow}

\newcommand{\id}{\textnormal{id}}

\newcommand{\Hom}{\textnormal{Hom}}

\newcommand{\norm}[1]{\textnormal{N}_{#1}}
\newcommand{\End}{\textnormal{End}}

\newcommand{\rad}{\textnormal{rad}}

\numberwithin{equation}{section}

\title{Modular Representations Of Profinite Groups}
\author{John William MacQuarrie}

\begin{document}

\newtheorem{defn}[equation]{Def{i}nition}
\newtheorem{prop}[equation]{Proposition}
\newtheorem{lemma}[equation]{Lemma}
\newtheorem{theorem}[equation]{Theorem}
\newtheorem{corol}[equation]{Corollary}
\newtheorem{question}[equation]{Questions}

\maketitle

\section{Introduction}

The modular representation theory of a finite group $G$ attempts to
describe the modules over the group algebra $kG$, where $k$ is a
field of characteristic $p$ dividing the order of $G$. Under these
circumstances $kG$ is not semisimple and the vast majority of
$kG$-modules are not completely reducible.  Towards an understanding
of these modules the important concept of relative projectivity has
been considered in some depth.

Modular representation theory seems very well-suited for
consideration in the wider context of profinite groups.  If $G$ is a
profinite group and $k$ is a finite field, then there is a very
natural profinite analogue of the group algebra for $G$, and hence
of the corresponding profinite modules.  There is also a
well-defined Sylow theory of profinite groups that in particular
allows us to consider analogues for $p$-subgroups. The close
connection between a profinite object and its finite quotients
allows us to generalize several foundational results of modular
representation theory to a much wider universe of groups.

We give here an indication of our approach and the main results.  In Section \ref{relative projectivity section} we define the concept of relative projectivity for a $k\db{G}$-module and prove a characterization of finitely generated relatively $H$-projective modules, where $H$ is a closed subgroup of $G$ (Theorem \ref{relprojcharfinished}).  Of particular note in this characterization is the fact that a $k\db{G}$-module is relatively $H$-projective if and only if it is relatively $HN$-projective for every open normal subgroup $N$ of $G$.  In Section \ref{vertex section} we introduce the vertex of an indecomposable finitely generated module, proving existence (Corollary \ref{vertex exists}) and uniqueness up to conjugation in $G$ (Theorem \ref{verticesconjugate}).  Crucial in the proof of \ref{verticesconjugate}, and elsewhere, is the helpful fact that a finitely generated indecomposable $k\db{G}$-module has local endomorphism ring (Proposition \ref{fg module has LER}).  In Section \ref{sources section} we introduce the concept of source, but note that this object seems less natural in the profinite category than it does in the finite case.  We prove under additional hypotheses that finitely generated sources are unique up to conjugation (Theorem \ref{sources conjugate}).  In the last section we prove an analogue of Green's indecomposability theorem for modules over the completed group algebra of a virtually pro-$p$ group (Theorem \ref{profinite indecomposability}).  To do this, we first show that an important characterization of absolutely indecomposable modules, known to hold for finite groups, also holds for virtually pro-$p$ groups (Theorem \ref{profinite absindec iff End/rad is k}).  Finally, we answer the question of what happens when the module in question is not necessarily absolutely indecomposable, showing that the induced summands are isomorphic (Theorem \ref{profinite inducedsummandsisomorphic}).  In many proofs we utilize a class of quotient modules known as coinvariants.  These give a natural inverse system for a module with some very useful properties, many of which are elucidated in Section \ref{prelims}.

It is hoped that in the future, results in the area will have number theoretic applications (to Iwasawa algebras or to Galois theory, for instance) as well as being of interest from a purely algebraic perspective.

There are excellent books available covering the prerequisite
material of this paper.  For a detailed introduction to profinite
objects see \cite{wilson}, or for an explicitly functorial approach
well suited to our needs see \cite{ribzal}.  For the modular
representation theory of finite groups see \cite{alperin},
\cite{benson} or the encyclopedic \cite{curtisandreiner1}.  Our
discussion will for the most part follow the path laid out in the
seminal paper \cite{greenvertexpaper} of J.A. Green, published
almost exactly 50 years ago.

\section{Preliminaries}\label{prelims}

Throughout our discussion let $k$ be a finite field of
characteristic $p$ and let $G$ be a profinite group.  We define
well-known profinite analogues of the natural objects of modular
representation theory.  Denote by $k\db{G}$ the \emph{completed
group algebra} of $G$ - that is, the completion of the abstract
group algebra $kG$ with respect to the open normal subgroups of $G$.
Since $k$ is finite and $G$ is profinite, the completed group
algebra $k\db{G}$ is profinite.  A profinite $k\db{G}$-module is a
profinite additive abelian group $U$ together with a continuous map
$k\db{G}\times U\to U$ satisfying the usual module axioms.  It
follows from \cite[5.1.1]{ribzal} that $U$ is the inverse limit of
an inverse system of finite quotient modules of $U$.  If not
explicitly stated, our modules are \emph{profinite left} modules.

Let $H$ be a closed subgroup of $G$.  If $W$ is a right
$k\db{H}$-module and $V$ is a left $k\db{H}$-module, then we denote
by $W\ctens_{k\db{H}}V$ the \emph{completed tensor product} of $W$
and $V$ over $k\db{H}$ \cite[5.5]{ribzal}.  This is the natural
profinite analogue of the abstract tensor product and satisfies most
of the properties one would expect.  If either $W$ or $V$ is
finitely generated as a $k\db{H}$-module then the completed tensor
product and abstract tensor product coincide
\cite[5.5.3(d)]{ribzal}.  Now let $V$ be a profinite
$k\db{H}$-module and define the \emph{induced} $k\db{G}$-module
$V\ind{G}$ as $k\db{G}\ctens_{k\db{H}}V$ with action from $G$ on the
left factor.  If $U$ is a $k\db{G}$-module then the
\emph{restricted} $k\db{H}$-module $U\res{H}$ is the module $U$ with
coefficients restricted to $k\db{H}$.

A profinite $k\db{G}$-module $U$ is said to be \emph{finitely
generated} if there is a finite subset $\{u_1,\hdots, u_n\}$ of $U$
with every element of $U$ a $k\db{G}$-linear combination of the
elements $u_1,\hdots,u_n$.  Thus $U$ is the module abstractly
generated by the given finite subset, but by \cite[7.2.2]{wilson}
this module is in fact profinite.

Whenever $U,W$ are profinite $k\db{G}$-modules, denote by $\Hom_{k\db{G}}(U,W)$ the
$k$-module of continuous $k\db{G}$-module homomorphisms from $U$ to $W$.  We sketch
proofs for some properties of this object that do not seem to be
explicitly mentioned in the literature.

\begin{lemma} \label{Hom(U,W)isinverselimit}
Let $U$ and $W=\invlim_I W_i$ be profinite $k\db{G}$-modules.  then there is a
topological isomorphism
$$\Hom_{k\db{G}}(U,W)\iso \invlim_{i\in I}\Hom_{k\db{G}}(U,W_i),$$
where each set of maps is given the compact-open topology.
\end{lemma}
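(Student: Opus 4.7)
The plan is to construct the claimed map from left to right using the universal property of inverse limits applied fibrewise, then verify that both the map and its inverse are continuous with respect to the compact-open topology.

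First I would set up the map. For each $i \in I$, let $\pi_i : W \to W_i$ be the structural projection; it is a continuous $k\db{G}$-module homomorphism, so postcomposition gives a $k$-linear map $\pi_{i*} : \Hom_{k\db{G}}(U,W) \to \Hom_{k\db{G}}(U,W_i)$. These are plainly compatible with the transition maps of the inverse system $(W_i)$, so they assemble into a $k$-linear map
$$\Phi : \Hom_{k\db{G}}(U,W) \longrightarrow \invlim_{i\in I} \Hom_{k\db{G}}(U,W_i).$$
Conversely, given a compatible family $(f_i)_{i\in I}$ with $f_i \in \Hom_{k\db{G}}(U, W_i)$, the universal property of the inverse limit in the category of profinite $k\db{G}$-modules produces a unique continuous $k\db{G}$-homomorphism $f : U \to W$ with $\pi_i \circ f = f_i$ for all $i$. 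This gives a set-theoretic inverse $\Psi$ to $\Phi$, and it remains only to show that both $\Phi$ and $\Psi$ are continuous.

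For continuity of $\Phi$ I would check the composition with each coordinate projection. A subbasic open set of $\Hom_{k\db{G}}(U,W_i)$ in the compact-open topology has the form $(C,V_i) = \{g : g(C) \subseteq V_i\}$ for a compact $C \subseteq U$ and an open $V_i \subseteq W_i$, and its preimage under $\pi_{i*}$ is visibly $(C,\pi_i^{-1}(V_i))$, which is open in $\Hom_{k\db{G}}(U,W)$ because $\pi_i^{-1}(V_i)$ is open in $W$. Hence $\Phi$ is continuous.

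For continuity of $\Psi$ the main point is that the topology of $W = \invlim_I W_i$ has a subbasis consisting of the sets $\pi_i^{-1}(V_i)$ with $V_i$ open in $W_i$. Thus it suffices, by the standard reduction to a subbasis of the compact-open topology on the target, to consider a subbasic open set of the form $(C, \pi_j^{-1}(V_j)) \subseteq \Hom_{k\db{G}}(U,W)$. A map $f = \Psi((f_i))$ lies in this set precisely when $\pi_j\circ f(C) \subseteq V_j$, i.e.\ when $f_j(C) \subseteq V_j$, so the preimage under $\Psi$ is the preimage of $(C,V_j) \subseteq \Hom_{k\db{G}}(U,W_j)$ under the $j$-th coordinate projection from the inverse limit, which is open. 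The step I expect to require the most care is precisely this last one: justifying the reduction to subbasic opens of the claimed form on $W$, and, if the paper wishes to allow arbitrary (rather than just finitely generated) profinite $U$, checking that compact subsets $C \subseteq U$ interact correctly with the inverse limit structure. Once these standard but slightly fiddly topological verifications are in place, $\Phi$ is a continuous bijection with continuous inverse, hence a topological isomorphism.
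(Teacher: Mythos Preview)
Your proposal is correct and follows exactly the route the paper indicates: the paper's proof is just the two-line sketch ``abstractly this is essentially the definition of inverse limit; using basic properties of the compact-open topology it is easily verified that the obvious isomorphism is a homeomorphism,'' and you have supplied precisely those details. The subbasis reduction you flag as needing care is indeed the only nontrivial point, and it goes through here because $U$ is compact Hausdorff (so the sets $(C,S)$ with $S$ ranging over a subbasis of $W$ do form a subbasis for the compact-open topology).
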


\begin{proof}
Abstractly this is essentially the definition of inverse limit.
Using basic properties of the compact-open topology it is easily
verified that the obvious isomorphism is a homeomorphism.
\end{proof}

\begin{corol}\label{fg homs profinite}
If $U$ is a finitely generated profinite $k\db{G}$-module and $W$ is
a profinite $k\db{G}$-module, then $\Hom_{k\db{G}}(U,W)$ is
profinite.
\end{corol}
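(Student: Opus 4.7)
The plan is to reduce via Lemma \ref{Hom(U,W)isinverselimit} to the case of a finite target module, observe that $\Hom_{k\db{G}}(U,W_i)$ is finite whenever $W_i$ is finite, and then conclude by assembling these into an inverse limit of finite discrete spaces.

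Since $W$ is profinite, I would first invoke \cite[5.1.1]{ribzal} (already cited in the paper) to write $W = \invlim_{i\in I} W_i$ as the inverse limit of its finite quotient modules. Lemma \ref{Hom(U,W)isinverselimit} then yields a topological isomorphism
\[
\Hom_{k\db{G}}(U,W) \iso \invlim_{i\in I} \Hom_{k\db{G}}(U,W_i),
\]
so it suffices to show that each $\Hom_{k\db{G}}(U,W_i)$ is finite and discrete in the compact-open topology, since an inverse limit of such spaces is profinite.

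For finiteness, I would use the finite generation of $U$: fix a finite generating set $\{u_1,\hdots,u_n\}$. Every continuous $k\db{G}$-module homomorphism $\varphi\colon U \to W_i$ is determined by the tuple $(\varphi(u_1),\hdots,\varphi(u_n))$, so evaluation embeds $\Hom_{k\db{G}}(U,W_i)$ into the finite set $W_i^n$. For discreteness, given $\varphi \neq \psi$ pick $u \in U$ with $\varphi(u) \neq \psi(u)$; then the subbasic compact-open neighbourhood $\{\eta : \eta(u) = \varphi(u)\}$, which is well defined because $\{u\}$ is compact and $\{\varphi(u)\}$ is open in the finite discrete $W_i$, separates them. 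A Hausdorff space with finitely many points is automatically discrete.

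I do not foresee any serious obstacle. The heavy lifting has been done by the preceding lemma; what remains is the routine module-theoretic observation that a continuous homomorphism out of a finitely generated module into a finite module is pinned down by finitely much data, together with an elementary topological check. If anything is mildly delicate, it is confirming that the topology inherited from the inverse limit on the left side of the displayed isomorphism genuinely matches the compact-open topology — but this is precisely what Lemma \ref{Hom(U,W)isinverselimit} asserts.
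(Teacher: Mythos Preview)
Your argument is correct and is precisely the intended one: the paper states the result as an immediate corollary of Lemma~\ref{Hom(U,W)isinverselimit} without further proof, and you have simply spelled out the obvious details (reduce to finite $W_i$, use finite generation to bound $\Hom_{k\db{G}}(U,W_i)$ by $W_i^n$, and take the inverse limit).
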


If $H$ is a closed subgroup of $G$ ($H\leq_C G$) the functor
$(-)\ind{G}_H$ is left adjoint to $(-)\res{H}^G$.  The unit
$\eta:1\to (-)\ind{G}\res{H}$ is given by $\eta_V(v)=1\ctens v$ and
the counit $\varepsilon: (-)\res{H}\ind{G}\to 1$ by
$\varepsilon_U(g\ctens u)=gu$. In particular we have the following:

\begin{lemma}\label{H hom extends by induction}
Let $H\leq_C G$ and $V$ a $k\db{H}$-module.  Having identified $V$
with $1\ctens_{k\db{H}}V\subseteq V\ind{G}$, every continuous
$k\db{H}$-module homomorphism $V\to U\res{H}$ extends uniquely to a
continuous $k\db{G}$-module homomorphism $V\ind{G}\to U$.
\end{lemma}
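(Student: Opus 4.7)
The plan is to recognize this lemma as a direct translation of the adjunction $(-)\ind{G}_H \dashv (-)\res{H}^G$ stated in the paragraph above the lemma, namely the natural bijection
\[
\Hom_{k\db{G}}(V\ind{G}, U) \iso \Hom_{k\db{H}}(V, U\res{H})
\]
given by precomposition with the unit $\eta_V: v \mapsto 1 \ctens v$. I would execute this in two short steps: first exhibit the extension, then prove it is unique.

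For existence, given a continuous $k\db{H}$-homomorphism $\phi: V \to U\res{H}$, the candidate extension on elementary tensors is $\tilde\phi(a \ctens v) := a\,\phi(v)$. To build $\tilde\phi$ honestly, I would invoke the universal property of the completed tensor product: the map $k\db{G} \times V \to U$ defined by $(a,v) \mapsto a\,\phi(v)$ is continuous, $k$-bilinear, and $k\db{H}$-balanced, the last because $(ah)\phi(v) = a(h\phi(v)) = a\,\phi(hv)$ by the $k\db{H}$-linearity of $\phi$. Thus by \cite[5.5]{ribzal} it descends to a continuous $k$-linear map $\tilde\phi: V\ind{G} = k\db{G} \ctens_{k\db{H}} V \to U$ which satisfies $\tilde\phi(1 \ctens v) = \phi(v)$, so really does extend $\phi$. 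Linearity over $k\db{G}$ is then verified on elementary tensors, $\tilde\phi(b(a \ctens v)) = ba\,\phi(v) = b\,\tilde\phi(a \ctens v)$, and pushed to all of $V\ind{G}$ by density and continuity.

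For uniqueness, suppose $\psi: V\ind{G} \to U$ is any continuous $k\db{G}$-homomorphism extending $\phi$ under the identification $v \leftrightarrow 1 \ctens v$. Then on elementary tensors
\[
\psi(a \ctens v) = a\,\psi(1 \ctens v) = a\,\phi(v) = \tilde\phi(a \ctens v),
\]
so $\psi$ and $\tilde\phi$ agree on the dense subset of elementary tensors, hence everywhere by continuity.

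The main obstacle, and the only place requiring care, is to use the universal property of $\ctens_{k\db{H}}$ in its correct topological form, so that \emph{continuous} balanced maps correspond to \emph{continuous} maps out of the completed tensor product; this is exactly what distinguishes the profinite adjunction from its purely algebraic counterpart. Once that is cited from \cite[5.5]{ribzal}, everything else is formal.
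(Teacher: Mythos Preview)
Your proposal is correct and is essentially the paper's approach: the paper does not give a separate proof of this lemma at all, but simply records it as a consequence of the adjunction $(-)\ind{G}_H \dashv (-)\res{H}^G$ stated in the preceding paragraph. Your write-up just unpacks that adjunction explicitly via the universal property of the completed tensor product, which is exactly what underlies the cited adjunction; the only nitpick is that it is the $k$-linear span of the elementary tensors, not the elementary tensors themselves, that is dense in $V\ind{G}$, but since both $\psi$ and $\tilde\phi$ are $k$-linear this changes nothing.
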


The following result will also be of use.  For the definition of a filter base see \cite[1.2]{wilson}.

\begin{lemma}\label{ResInd inverse limit}
Let $U$ be a $k\db{G}$-module and let $\{W_i\,|\,i\in I\}$ be a
filter base of open subgroups of $G$.  Then $U\res{W}\ind{G}\iso
\invlim_i\, U\res{W_i}\ind{G}$, where $W=\bigcap W_i$.
\end{lemma}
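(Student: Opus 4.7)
The plan is to show that $\invlim_i U\res{W_i}\ind{G}$, equipped with a natural map from $U$, satisfies the universal property characterizing $U\res{W}\ind{G}$ via the adjunction of Lemma \ref{H hom extends by induction}; uniqueness of objects representing a functor then yields the claimed isomorphism. First, I would define $\eta':U\to \invlim_i U\res{W_i}\ind{G}$ by $u\mapsto (1\ctens u)_i$ (the tuple is compatible and $\eta'$ is a continuous $k\db{W}$-module map because $W\subseteq W_i$ for every $i$). The main task then becomes: for any profinite $k\db{G}$-module $X$ and any continuous $k\db{W}$-map $f:U\to X\res{W}$, produce a unique continuous $k\db{G}$-map $\tilde f: \invlim_i U\res{W_i}\ind{G}\to X$ with $\tilde f\circ\eta'=f$.

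Uniqueness will follow from density of the abstract $k\db{G}$-submodule generated by $\eta'(U)$ inside $\invlim_i U\res{W_i}\ind{G}$. Given a basic open neighbourhood constraining finitely many coordinates $j_1,\hdots,j_n$, I would pick a common refinement $W_{j_0}\subseteq \bigcap_k W_{j_k}$ using the filter base property; since $W_{j_0}$ is open in $G$, the module $U\res{W_{j_0}}\ind{G}=\bigoplus_{t\in G/W_{j_0}} t\ctens U$ is a genuine finite direct sum, so the $j_0$-component of the given tuple is already a finite sum of elementary tensors, which lifts tautologically to an element of the $k\db{G}$-submodule generated by $\eta'(U)$.

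For existence, I would write $X=\invlim_\alpha X_\alpha$ with $X_\alpha$ finite $k\db{G}$-modules and let $f_\alpha:U\to X_\alpha$ denote the composition. The main obstacle, which is the heart of the argument, is the claim that each $f_\alpha$ is $W_i$-equivariant for some index $i=i(\alpha)$. I would prove this by a compactness argument: the continuous discrepancy $F:G\times U\to X_\alpha$, $(g,u)\mapsto f_\alpha(gu)-g\cdot f_\alpha(u)$, has open zero set (since $X_\alpha$ is discrete) containing the compact $W\times U$, so by the tube lemma (using compactness of $U$) it contains $V\times U$ for some open $V\supseteq W$ in $G$; a standard finite-intersection-property argument in the compact space $G$ shows that $\{W_i\}$ is a basis of open neighbourhoods of $W$, so $W_i\subseteq V$ for some $i$. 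Granted this claim, Lemma \ref{H hom extends by induction} provides an extension $U\res{W_i}\ind{G}\to X_\alpha$, which pulls back via the canonical projection to $\tilde f_\alpha:\invlim_j U\res{W_j}\ind{G}\to X_\alpha$; the uniqueness clause of that same lemma, combined with the filter base property used to find a common $W_i$ working for any two indices $\alpha,\alpha'$, ensures that the $\tilde f_\alpha$ are compatible in $\alpha$ and assemble into the desired $\tilde f$.
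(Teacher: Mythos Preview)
Your proof is correct and follows a genuinely different route from the paper's. The paper dispatches the lemma in one line by citing \cite[5.2.2, 5.5.2, 5.8.1]{ribzal}: once one knows that completed tensor products commute with inverse limits and has the appropriate inverse-limit description of the relevant bimodule, the isomorphism drops out formally. Your argument instead verifies directly that $\invlim_i U\res{W_i}\ind{G}$ satisfies the universal property of $U\res{W}\ind{G}$ given by the adjunction of Lemma~\ref{H hom extends by induction}. The heart of your proof is the tube-lemma step showing that a continuous $k\db{W}$-map into a finite module is automatically $k\db{W_i}$-linear for some $i$; this is a nice compactness argument that makes the proof entirely self-contained, at the cost of being longer than a citation. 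One small point worth making explicit: when you define $\tilde f_\alpha$ by choosing some $i=i(\alpha)$ and pulling back, you should note that the resulting map does not depend on that choice (pass to a common refinement $W_{i''}\subseteq W_i\cap W_{i'}$ and invoke the uniqueness clause of Lemma~\ref{H hom extends by induction} once more); you essentially use this when checking compatibility in $\alpha$, but it is logically prior. Also, your density argument actually shows more than you claim: since each $W_{j_0}$ is open, the abstract $kG$-span of $\eta'(U)$ already surjects onto every coordinate $U\res{W_{j_0}}\ind{G}$, so no approximation is needed at that stage.
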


\begin{proof}
This follows from \cite[5.2.2, 5.5.2, 5.8.1]{ribzal}.
\end{proof}

If $U$ is a finitely generated $k\db{G}$-module then we can give a
reasonably explicit inverse system for $U$ using \emph{coinvariant}
quotient modules.  If $N$ is a closed normal subgroup of $G$, then
the coinvariant module $U_N$ is defined as $k\ctens_{k\db{N}}U$,
where the left factor $k$ is the trivial $k\db{G}$-module.  The
action of $G$ on $U_N$ is given by $g(\lambda\ctens u)=\lambda\ctens
gu$. In tensor product notation we usually denote $U_N$ by
$k\ctens_N U$. The module $U_N$ can usefully be described as
follows:

\begin{lemma}\label{coinvUP}
$U_N$ together with the canonical projection map $\varphi_N:U\to
U_N$ is (up to isomorphism) the
unique $k\db{G}$-module on which $N$ acts trivially and satisfying
the following universal property:

Every continuous $k\db{G}$-module homomorphism $\rho$ from $U$ to a
profinite $k\db{G}$-module $X$ on which $N$ acts trivially factors
uniquely through $\varphi_N$.  That is, there is a unique continuous
homomorphism $\rho':U_N\to X$ such that $\rho'\varphi_N=\rho$.
\end{lemma}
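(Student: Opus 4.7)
The plan is to deduce this lemma directly from the universal property of the completed tensor product, together with the standard argument that any object defined by a universal property is unique up to unique isomorphism.

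Before addressing the universal property, I would first verify the two structural claims. Continuity of $\varphi_N:u\mapsto 1\ctens u$ is built into the definition of the completed tensor product. For the triviality of the $N$-action on $U_N$, I would use the tensor relation $\lambda n\ctens u = \lambda\ctens nu$ together with the fact that the left factor $k$ is the trivial $k\db{G}$-module: for $n\in N$,
$$n\cdot(\lambda\ctens u)=\lambda\ctens nu=\lambda n\ctens u=\lambda\ctens u.$$

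Next, for the existence half of the universal property, suppose $\rho:U\to X$ is a continuous $k\db{G}$-module homomorphism into $X$ on which $N$ acts trivially. I would consider the continuous bilinear map $\beta:k\times U\to X$ sending $(\lambda,u)\mapsto\lambda\rho(u)$ and check that it is $k\db{N}$-balanced: for $n\in N$ we have $\beta(\lambda n,u)=\lambda\rho(u)$ using the triviality of the $G$-action on $k$, and $\beta(\lambda,nu)=\lambda\rho(nu)=\lambda\cdot n\rho(u)=\lambda\rho(u)$ using the $G$-equivariance of $\rho$ and the triviality of $N$ on $X$. The universal property of the completed tensor product \cite[5.5]{ribzal} then yields a unique continuous $k$-linear map $\rho':U_N\to X$ with $\rho'(\lambda\ctens u)=\lambda\rho(u)$; in particular $\rho'\varphi_N=\rho$. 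To upgrade $\rho'$ to a $k\db{G}$-module homomorphism, I would compare actions on elementary tensors,
$$\rho'\bigl(g(\lambda\ctens u)\bigr)=\rho'(\lambda\ctens gu)=\lambda\rho(gu)=\lambda g\rho(u)=g\rho'(\lambda\ctens u),$$
and extend to the whole of $U_N$ by continuity.

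For the uniqueness of $\rho'$: any two candidate continuous $k\db{G}$-homomorphisms $U_N\to X$ extending $\rho$ agree on $\varphi_N(U)=\{1\ctens u:u\in U\}$, hence on the $k$-span of this set, which is dense in $U_N$ by construction of the completed tensor product; continuity then forces equality. Finally, the uniqueness of the pair $(U_N,\varphi_N)$ as an object satisfying the universal property is the usual Yoneda-style argument: given another $(V,\psi)$, one factors each of $\varphi_N,\psi$ through the other and observes that the composites must be the identities by uniqueness in each universal property. The only point in the whole argument requiring actual care is the verification that $\beta$ is balanced and continuous so that the universal property of $k\ctens_{k\db{N}}U$ applies; everything else is routine bookkeeping.
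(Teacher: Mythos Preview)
The paper states this lemma without proof, evidently regarding it as a routine consequence of the definition of the completed tensor product. Your proposal correctly supplies the details, deducing the universal property of $U_N$ from that of $k\ctens_{k\db{N}}U$ via continuous balanced maps, and the argument is sound.

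Two minor remarks. First, your density argument for uniqueness of $\rho'$ can be sharpened: $\varphi_N$ is actually surjective, since every elementary tensor $\lambda\ctens u$ equals $1\ctens(\lambda u)=\varphi_N(\lambda u)$ and the image of the continuous map $\varphi_N$ from the compact space $U$ is compact, hence closed; thus agreement on $\varphi_N(U)=U_N$ gives equality outright. Second, you verify the balancing condition only for group elements $n\in N$, whereas one needs it for all $r\in k\db{N}$; this is fine because the abstract group algebra $k[N]$ is dense in $k\db{N}$, $\beta$ is $k$-bilinear, and both sides are continuous in the middle variable, so the identity extends. Neither point affects the validity of your proof.
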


Note that $N$ is in the kernel of the action of $k\db{G}$ on $U_N$,
so that $U_N$ can be considered as a $k\db{G/N}$-module.  It follows
that if $N$ is open and $U$ is finitely generated then $U_N$ is
finite. From properties of the completed tensor product (which in
this case is the same as the abstract tensor product) it is also
easy to check that the operation $(-)_N$ is a right exact functor
from the category of (finitely generated) $k\db{G}$-modules to the
category of (finitely generated) $k\db{G/N}$-modules.

We collect here several properties of coinvariant modules. First a
list of important technical details:

\begin{lemma}\label{coinvariant technicals}
Let $G$ be a profinite group, $N,M$ closed normal subgroups of $G$
with $N\leq M$ and $H$ a closed subgroup of $G$.  Let $U,W$ be
$k\db{G}$-modules and let $V$ be a $k\db{H}$-module.  Then
\begin{enumerate}
\item $(U_N)_M$ is naturally isomorphic to $U_M$.
\item $(U\oplus W)_N\iso U_N\oplus W_N$.
\item $V_{H\cap N}$ is naturally a $k\db{HN/N}$-module.
\item $(V\ind{G})_N\iso V_{H\cap N}\ind{G/N}$.
\item $U_N\res{HN/N}\iso (U\res{HN})_N$.
\end{enumerate}
\end{lemma}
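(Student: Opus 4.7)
The plan is to prove all five items by systematic appeal to the universal property of coinvariants (Lemma \ref{coinvUP}), combined with the induction-restriction adjunction (Lemma \ref{H hom extends by induction}) and the standard identification of $k\db{G}$-modules on which $N$ acts trivially with $k\db{G/N}$-modules. In each case I will exhibit the asserted isomorphism by showing that the two sides represent the same functor on a suitable category, so that the isomorphism and naturality come for free.

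For (1), both $(U_N)_M$ and $U_M$ serve as the universal $k\db{G}$-module quotient of $U$ on which $M$ acts trivially: indeed $M$ acts trivially on $U_M$ and (since $N \leq M$) $N$ also acts trivially, so the canonical $U \to U_M$ factors uniquely through $U_N$ and then through $(U_N)_M$, while the universal property of $U_M$ produces an inverse. For (2), the same tactic works since $U_N \oplus W_N$ visibly satisfies the universal property for $(U \oplus W)_N$; alternatively one may simply invoke the fact that $k \ctens_{k\db{N}} (-)$ is right exact and preserves finite direct sums. For (3), $H \cap N$ is closed normal in $H$ and acts trivially on $V_{H \cap N} = k \ctens_{k\db{H \cap N}} V$, so the $k\db{H}$-action factors through $k\db{H/(H \cap N)}$, which is topologically isomorphic to $k\db{HN/N}$ by the standard profinite second isomorphism theorem.

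For (4), the crucial observation is that a $k\db{G}$-module on which $N$ acts trivially is exactly a $k\db{G/N}$-module, and similarly $k\db{H}$-linear maps out of a module annihilated by $H \cap N$ are exactly $k\db{HN/N}$-linear maps by (3). Given any $k\db{G/N}$-module $X$ (viewed as a $k\db{G}$-module with $N$ acting trivially), I would chain together four adjunctions/universal properties to obtain natural bijections
$$\Hom_{k\db{G/N}}((V\ind{G})_N, X) \iso \Hom_{k\db{G}}(V\ind{G}, X) \iso \Hom_{k\db{H}}(V, X\res{H})$$
(the second by Lemma \ref{H hom extends by induction}), and
$$\Hom_{k\db{G/N}}(V_{H\cap N}\ind{G/N}, X) \iso \Hom_{k\db{HN/N}}(V_{H\cap N}, X\res{HN/N}) \iso \Hom_{k\db{H}}(V, X\res{H}),$$
the last using the universal property of $V_{H\cap N}$ together with (3). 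Matching up these two chains yields the isomorphism by Yoneda, and (1)--(3) give naturality.

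For (5), I would note that both modules have the same underlying set $k \ctens_{k\db{N}} U$ and the same $HN/N$-action $\overline{h}\cdot(1\ctens u) = 1 \ctens hu$, so the isomorphism is the identity map; what needs checking is only that this map is continuous, which follows from the construction of the completed tensor product. The main obstacle I anticipate is (4): one must be careful that each displayed adjunction preserves continuity and $k\db{G}$-linearity (as opposed to just abstract linearity), and in particular that the identification of $k\db{G}$-modules with trivial $N$-action and $k\db{G/N}$-modules is compatible with the topology. All of this ultimately reduces to standard continuity properties of the completed tensor product cited in \cite[5.5]{ribzal}, but the bookkeeping across four simultaneous adjunctions is the most delicate part of the argument.
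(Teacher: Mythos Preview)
Your proposal is correct and follows essentially the same approach as the paper: the paper proves (1) by repeated use of the universal property \ref{coinvUP} exactly as you do, and for (2)--(5) merely says ``the remaining isomorphisms are easily verified,'' which your arguments (direct sum preservation, second isomorphism theorem, Yoneda via the adjunctions, identity on underlying sets) flesh out in a natural way. Your Yoneda treatment of (4) is perhaps more systematic than the explicit tensor-product manipulation the paper likely has in mind, but it is a standard and correct route to the same isomorphism.
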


\begin{proof}
The maps required for \emph{1.} are obtained by repeated use of the
universal property \ref{coinvUP}.  The remaining isomorphisms are
easily verified.
\end{proof}

We are primarily interested in coinvariant modules for the following
reason:

\begin{prop}\label{module invlimit of coinvariants}
If $U$ is a profinite $k\db{G}$-module, then $\{U_N\,|\, N\lhd_O
G\}$ together with the set of canonical quotient maps forms a
surjective inverse system with inverse limit $U$.
\end{prop}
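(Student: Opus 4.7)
The plan is to construct the inverse system, verify surjectivity of the transition maps, and then identify the limit with $U$ by showing the system $\{U_N\}$ is cofinal with the system of all finite quotient $k\db{G}$-modules of $U$.

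First, for open normal subgroups $N\leq M$ of $G$, since $M$ acts trivially on $U_M$ so does $N$, and the universal property \ref{coinvUP} applied to $\varphi_M: U \to U_M$ yields a unique continuous $k\db{G}$-homomorphism $\pi_{NM}:U_N\to U_M$ with $\pi_{NM}\varphi_N=\varphi_M$. The uniqueness clause in \ref{coinvUP} forces the transitivity relation $\pi_{ML}\pi_{NM}=\pi_{NL}$ for $N\leq M\leq L$, and the index set is directed because open normal subgroups are closed under finite intersection, so $\{U_N,\pi_{NM}\}$ is an inverse system. For surjectivity of each $\pi_{NM}$, observe that $\varphi_N$ is surjective: elements of $U_N=k\ctens_{k\db{N}}U$ are limits of finite sums $\sum\lambda_i\ctens u_i$, and since the scalars pull across the tensor, each such sum equals $1\ctens\sum\lambda_iu_i$, so the continuous map $u\mapsto 1\ctens u$ has dense image, which is closed by compactness of $U$. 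The identity $\pi_{NM}\varphi_N=\varphi_M$ then forces $\pi_{NM}$ to be surjective.

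To identify the limit, let $\psi:U\to\invlim_{N}U_N$ be the canonical continuous map supplied by the universal property of the inverse limit. Since $U$ is profinite, \cite[5.1.1]{ribzal} allows us to write $U=\invlim_i U^{(i)}$ over an inverse system of finite quotient $k\db{G}$-modules, and the strategy is to match this system up cofinally with $\{U_N\}$. On one side, each $U_N$ is itself profinite, so $U_N=\invlim_V U_N/V$ over its finite quotient $k\db{G/N}$-modules $V$; each $U_N/V$ is in particular a finite $k\db{G}$-module quotient of $U$ and therefore appears (up to isomorphism) among the $U^{(i)}$. On the other side, given any finite quotient $U^{(i)}$, continuity of the action $G\times U^{(i)}\to U^{(i)}$ into the discrete module $U^{(i)}$ forces some open normal subgroup $N_i$ of $G$ to act trivially on $U^{(i)}$; the universal property \ref{coinvUP} then factors the quotient $U\onto U^{(i)}$ through $\varphi_{N_i}$, exhibiting $U^{(i)}$ as a finite quotient of $U_{N_i}$. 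Hence the two systems of quotients are cofinal, which gives $\invlim_N U_N\iso\invlim_i U^{(i)}\iso U$, and chasing the canonical maps identifies this isomorphism with $\psi$.

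The main obstacle is to set up the cofinality comparison in a way that respects transition maps. This is where the universal property \ref{coinvUP} does almost all of the work: because both systems arise as quotients of $U$, every transition map is determined by its action on a generator coming from $U$, so no additional compatibility needs to be checked. Once the cofinality is in place, that $\psi$ is a topological isomorphism follows from the standard fact that cofinal inverse systems of profinite groups have canonically isomorphic limits.
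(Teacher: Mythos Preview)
Your argument is correct and rests on the same key observation as the paper's: every finite quotient of $U$ factors through some $U_N$ via the universal property \ref{coinvUP}. The paper, however, packages this more directly. It simply shows the canonical map $\psi:U\to\invlim_N U_N$ is surjective (because each $\varphi_N$ is, invoking \cite[1.1.5]{ribzal}) and injective (for $u\neq 0$, pick a finite quotient $U/W$ in which $u$ survives; some open normal $N$ acts trivially on $U/W$, so the quotient factors through $\varphi_N$ and hence $\varphi_N(u)\neq 0$). Your cofinality route is valid but introduces a wrinkle you gloss over: since the $U_N$ need not be finite when $U$ is not finitely generated, the comparison with $\{U^{(i)}\}$ really runs through the double system $\{U_N/V\}_{N,V}$ of finite quotients of the $U_N$, and the conclusion $\invlim_N U_N\iso\invlim_i U^{(i)}$ implicitly uses the identification $\invlim_N U_N=\invlim_N\invlim_V(U_N/V)=\invlim_{N,V}(U_N/V)$. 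This is routine, but worth stating. The paper's approach buys brevity and avoids having to talk about a second inverse system at all; yours makes the role of the finite-quotient structure of $U$ more explicit and would generalise cleanly to any situation where one wants to compare two profinite presentations of the same object.
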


\begin{proof}
It is clear that the maps $\varphi_{MN}:U_N\to U_M$ given by
$1\ctens_{N}u\mapsto 1\ctens_{M}u$ whenever $N\leq M$ are well
defined and give an inverse system of the $k\db{G}$-modules $U_N$.
It is also clear that we have a compatible set of maps
$\varphi_N:U\to U_N$ given by $u\mapsto 1\ctens_{N}u$.  We need only
show that $U$ is in fact the inverse limit. The maps $\varphi_N$ are the
components of a surjective map of inverse systems, giving a
continuous surjection $u\mapsto(1\ctens_{N}u)$ onto the limit by
\cite[1.1.5]{ribzal}, so we need only check that this map is
injective.

To do this we use the universal property \ref{coinvUP}.  By
definition $U$ is profinite, so is the inverse limit of some inverse
system of finite quotient modules.  Fix $u\neq 0$ in $U$ and some
finite quotient module $U/W$ in which the image of $u$ is non-zero.
Then since $U/W$ is finite some $N\lhd_O G$ must act trivially on
$U/W$, so that the quotient map $U\onto U/W$ factors through $U_N$
via $\varphi_N$. But if the image of $u$ under the composition is
non-zero then certainly the image of $u$ under $\varphi_N$ is
non-zero, and so the image of $u$ in $\invlim U_N$ is non-zero.
Thus, our map is injective and $U\iso \invlim_N U_N$, as required.
\end{proof}

\begin{lemma}\label{coinv non vanishing}
Let $G$ be a profinite group and $U$ a non-zero profinite
$k\db{G}$-module. Let $N$ be a closed pro-$p$ subgroup of $G$.  Then
$U_N\neq 0$.
\end{lemma}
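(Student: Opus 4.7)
The plan is to reduce to the classical fact, provable by Nakayama's lemma, that a non-zero finite module over the group algebra of a finite $p$-group in characteristic $p$ has non-zero coinvariants. First I restrict $U$ to $k\db{N}$; since $U\res{N}$ is profinite and non-zero, there is a continuous surjection $\pi: U \onto V$ of $k\db{N}$-modules onto some non-zero finite quotient $V$.

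Next, I claim that $\pi$ induces a surjection $U_N \onto V_N$. The universal property underlying Lemma \ref{coinvUP} (which, though stated for normal subgroups, holds equally for any closed subgroup when interpreted purely in the category of profinite $k\db{N}$-modules) identifies $U_N$ as the largest profinite quotient of $U$ on which $N$ acts trivially. Hence the composition $U \onto V \onto V_N$ --- a continuous $k\db{N}$-homomorphism to a module with trivial $N$-action --- factors uniquely through $\varphi_N^U: U \to U_N$. Since $U \to V_N$ is surjective and factors through $U_N \to V_N$, the latter must itself be surjective.

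Finally, because $V$ is finite and $N$ is pro-$p$, the action of $N$ on $V$ factors through a finite $p$-group $Q := N/M$ for some $M \lhd_O N$, and one has $V_N \iso k \otimes_{k[Q]} V$. The group algebra $k[Q]$ is a local ring with residue field $k$ (its augmentation ideal being the unique maximal ideal), so Nakayama's lemma applied to the non-zero finitely generated $k[Q]$-module $V$ gives $V_N \neq 0$, and hence $U_N \neq 0$. I expect the main point to verify carefully is the induced surjection on coinvariants, which is in effect a right-exactness property of $k \ctens_{k\db{N}} -$ at $\pi$; the reduction to a finite quotient and the Nakayama step are both standard.
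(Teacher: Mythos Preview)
Your proof is correct and follows essentially the same route as the paper's: both restrict $U$ to $N$, pass to a finite quotient, and exploit the fact that $k[Q]$ is local for a finite $p$-group $Q$ (you via Nakayama, the paper via the equivalent statement that the only simple $k[Q]$-module is $k$) to produce a nonzero map from $U$ to a module with trivial $N$-action, which then factors through $U_N$. The only cosmetic difference is that the paper goes directly to a simple quotient $U/U'\iso k$, whereas you stop at an arbitrary finite quotient $V$ and then invoke Nakayama to see $V_N\neq 0$.
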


\begin{proof}
First suppose that $G$ is a pro-$p$ group.  Since $U$ is profinite
it has a proper open submodule of finite index and hence a maximal
submodule $U'$, so the module $U/U'$ is simple.  But $U/U'$ is
finite, so can be regarded as a module for the finite $p$-group
$G/N_0$ for some $N_0\lhd_O G$.  The only simple module over a
finite $p$-group is $k$, so that $U/U'\iso k$ and we have a
surjection $\beta:U\onto k$. But every $N\lhd_C G$ acts trivially on
$k$, so that $\beta$ factors through every $U_N$, and thus $U_N\neq
0$ for each $N\lhd_C G$.

Now let $G$ be a general profinite group.  Since $U\res{N}$ is a
non-zero module for the pro-$p$ subgroup $N$, by the previous
paragraph $0\neq (U\res{N})_N\iso U_N\res{N}$.  Hence $U_N\neq 0$.
\end{proof}

The above result is particularly useful when $G$ is a virtually
pro-$p$ group so that $G$ has a basis of open normal pro-$p$
subgroups.  In this case the following result will be of the utmost
importance:

\begin{prop}\label{U_N indec}
Let $G$ be a virtually pro-$p$ group and let $U$ be an
indecomposable finitely generated $k\db{G}$-module.  Then there
exists some $N_0\lhd_O G$ such that $U_N$ is indecomposable for
every $N\leq N_0$.
\end{prop}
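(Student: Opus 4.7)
The plan is to proceed by contradiction. Assume that no such $N_0$ exists, so that for every $N_0 \lhd_O G$ there is a closed normal subgroup $N \leq N_0$ with $U_N$ decomposable. I will construct from this data a non-trivial idempotent in $\End_{k\db{G}}(U)$, contradicting the indecomposability of $U$.

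First I would upgrade the hypothesis to the stronger statement that $U_N$ itself is decomposable for every open normal pro-$p$ subgroup $N$ of $G$. Given such an $N$, the assumption supplies a closed normal $M \leq N$ with $U_M \iso A \oplus B$ and $A, B \neq 0$. Parts 1 and 2 of Lemma \ref{coinvariant technicals} yield $U_N \iso (U_M)_N \iso A_{N/M} \oplus B_{N/M}$, and since $N/M$ is pro-$p$ (as a closed subgroup of the pro-$p$ group $N$), Lemma \ref{coinv non vanishing} forces both summands to remain non-zero.

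For each open normal pro-$p$ subgroup $N$ of $G$ let $I(N)$ denote the set of non-trivial idempotents of $\End_{k\db{G/N}}(U_N)$. Each $I(N)$ is non-empty by the reduction above and finite since $U_N$ is a finite $k$-vector space. For $M \leq N$ of this type, the ring homomorphism $\End(U_M) \to \End(U_N)$ induced by the functor $(-)_{N/M}$ sends $I(M)$ into $I(N)$: given $e \in I(M)$, both $eU_M$ and $(1-e)U_M$ are non-zero, and Lemma \ref{coinv non vanishing} applied with the pro-$p$ group $N/M$ ensures that their coinvariants are again non-zero, so the induced idempotent in $\End(U_N)$ is non-trivial. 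Thus $\{I(N)\}$ forms an inverse system of non-empty finite sets.

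To finish, since $G$ is virtually pro-$p$ the open normal pro-$p$ subgroups are cofinal among open normal subgroups of $G$. Combining Proposition \ref{module invlimit of coinvariants}, Lemma \ref{Hom(U,W)isinverselimit} and the universal property \ref{coinvUP} yields a topological ring isomorphism $\End_{k\db{G}}(U) \iso \invlim_N \End_{k\db{G/N}}(U_N)$ over this cofinal family. The inverse limit of a directed system of non-empty finite sets is non-empty, giving a compatible family $(e_N)$ which corresponds to an element $e \in \End_{k\db{G}}(U)$ satisfying $e^2 = e$ and $e \neq 0, 1$. Then $U = eU \oplus (1-e)U$ is a non-trivial direct sum decomposition, the required contradiction. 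The main obstacle I anticipate is the compatibility claim in the previous paragraph together with the clean identification of $\End_{k\db{G}}(U)$ with the inverse limit of endomorphism rings of coinvariants; once this scaffolding is in place, the conclusion is a formal compactness argument.
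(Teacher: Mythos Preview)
Your argument is correct and takes a genuinely different route from the paper's.

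The paper works directly with decompositions: it first observes that the number of indecomposable summands of $U_N$ (for pro-$p$ open normal $N$) cannot increase as $N$ shrinks, so this number stabilises at some $n$ below an $N_0$. It then builds an inverse system of \emph{sets of decompositions} $S_N$ of each $U_N$ into $n$ indecomposable pieces, picks a compatible element, and from it extracts a submodule $X\leq U$ together with a compatible family of splittings, showing $X$ is a proper summand of $U$ unless $n=1$. Your approach replaces all of this by the observation that a decomposition is the same thing as a non-trivial idempotent in the endomorphism ring: you form the inverse system $\{I(N)\}$ of non-trivial idempotents in $\End_{k\db{G/N}}(U_N)$, check (via Lemma~\ref{coinv non vanishing}) that the transition maps preserve non-triviality, and then lift through the identification $\End_{k\db{G}}(U)\iso \invlim_N \End_{k\db{G/N}}(U_N)$.

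Your route is cleaner: it avoids both the ad hoc inverse system $S_N$ and the separate lifting-of-splittings argument. The cost is that you need the ring isomorphism $\End_{k\db{G}}(U)\iso \invlim_N \End_{k\db{G/N}}(U_N)$, which the paper only records later (in Section~\ref{green indecomp section}, just before Lemma~\ref{finite radical to radical}); but as you note, this follows immediately from \ref{Hom(U,W)isinverselimit}, \ref{coinvUP} and \ref{module invlimit of coinvariants}, so there is no circularity. One small point of presentation: in your negation you write ``closed normal subgroup $N\leq N_0$'' where the proposition quantifies over open normal $N$; this does no harm (the correct negation implies your version), but it would read more naturally with ``open normal''.
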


\begin{proof}
We work within the cofinal (see \cite[1.1.9]{ribzal}) inverse system
$\{U_N, \varphi_{MN}\}$ of coinvariant modules for which $N$ is a
pro-$p$ group.  Since $\varphi_{MN}$ is functorial and summands
cannot have zero image by \ref{coinv non vanishing}, we see that as
we move up our system the number of direct summands of the $U_N$
cannot increase.  It follows that for some $N_0\lhd_O G$ and any
$N\leq N_0$ the number $n$ of indecomposable summands of $U_N$ is
equal to the number of indecomposable summands of $U_{N_0}$. We take
the cofinal inverse system of those $N$ contained in $N_0$

For each $N$, let $s_N$ be a set $\{X_{N,1}, \hdots, X_{N,n}\}$ of
$n$ indecomposable submodules of $U_N$ intersecting pairwise in $0$
and having the property that $U_N$ is equal to the (direct) sum
$X_{N,1}\oplus\hdots\oplus X_{N,n}$.  Denote by $S_N$ the set of all
possible $s_N$ - a non-empty finite set.  We form a new inverse
system of the finite sets $S_N$ via the maps $\psi_{MN}:S_N\to S_M$
given by
$$\psi_{MN}(s_N)=\psi_{MN}(\{X_{N,1}, \hdots, X_{N,n}\})=\{\varphi_{MN}(X_{N,1}), \hdots, \varphi_{MN}(X_{N,n})\}.$$

Since each $S_N$ is finite and non-empty the inverse limit of this
system is non-empty by \cite[1.1.4]{ribzal}.  We fix once and for
all some element $(s_N)$ of $\invlim S_N$, and for each $N$ we
choose our direct sum decomposition of $U_N$ to be the one given to
us by $s_N$.

Recall that we are only considering those $N\lhd_O G$ contained in
$N_0$, so that each $U_N$ maps onto $U_{N_0}$.  Fix some
indecomposable summand $X_{N_0}$ of $U_{N_0}$ (an element of
$s_{N_0}$) and for each $N$ in our system define $X_N$ to be the
unique element of $s_N$ with $\varphi_{N_0N}(X_N)=X_{N_0}$. It is
now easy to check that $\{X_N,\varphi_{MN}|_{X_N}\}$ is an inverse
system of submodules of the $U_N$.  Denote the inverse limit of this
system by $X$ - a submodule of $U$.

We want to show that $X$ is a summand of $U$.  For each $N$, we have
a canonical inclusion map $X_N\hookrightarrow U_N$, and these maps
give a map of inverse systems $\{X_N\}\to\{U_N\}$ in which each
component splits.  This corresponds to an injection
$\iota:X\hookrightarrow U$.  For each $N$, let $P_N$ denote the
finite, non-empty set of projection maps $U_N\onto X_N$ splitting
the corresponding component of $\iota$.  The functoriality of
$(-)_N$ gives us an inverse system of the $P_N$, and an element of
the limit is a map of inverse systems corresponding to a splitting
$\pi:U\to X$ of $\iota$.  Thus $X$ is a direct summand of $U$. But
$X\neq 0$ since $X_N\neq 0$ and the maps $\varphi_{N}|_{X}$ are
surjective, so that since $U$ is indecomposable we must have $X=U$.
But now $X_N=\varphi_{N}(X)=\varphi_{N}(U)=U_N$ for each $N$, and
thus each $U_N$ is indecomposable, as required.
\end{proof}

\section{Relative Projectivity}\label{relative projectivity section}

Our main definition is completely analogous to the equivalent
definition for finite groups.

\begin{defn}\label{relprojdefn}
Let $G$ be a profinite group and let $H\leq_C G$.  Then a profinite
$k\db{G}$-module $U$ is \emph{relatively $H$-projective} if whenever
we are given a diagram
$$
\xymatrix{
    & U \ar[d]^{\varphi}\\
V \ar@{->>}[r]_{\beta} & W}
$$
of profinite $k\db{G}$-modules and continuous $k\db{G}$-module
homomorphisms, then there exists a continuous $k\db{G}$-module
homomorphism $\rho:U\to V$ with $\beta\rho=\varphi$ provided there
is a $k\db{H}$-module homomorphism with this property.
\end{defn}

As in the finite case, a projective module is precisely a
$1$-projective module in the definition above.  Our goal for this
section is to obtain a characterization of relatively $H$-projective
finitely generated $k\db{G}$-modules analogous to D.G. Higman's
characterization in the finite case, for which see
\cite[3.6.4]{benson}. We will also demonstrate two new
characterizations that are trivial in the finite case but of great
use in our more general setting.

\begin{lemma}\label{relprojchar1}
Let $G$ be a profinite group and $H$ a closed subgroup of $G$. If
$U$ is a profinite $k\db{G}$-module then the following are
equivalent:
\begin{enumerate}
\item $U$ is relatively $H$-projective.

\item If ever a continuous $k\db{G}$-epimorphism $V\onto U$ splits as a
$k\db{H}$-module homomorphism, then it splits as a $k\db{G}$-module
homomorphism.

\item $U$ is a direct summand of $U\res{H}\ind{G}$.

\item $U$ is a direct summand of a module induced from some
profinite $k\db{H}$-module.
\end{enumerate}
\end{lemma}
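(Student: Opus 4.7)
My plan is to prove the cyclic chain of implications $1 \Rightarrow 2 \Rightarrow 3 \Rightarrow 4 \Rightarrow 1$, following the pattern of Higman's criterion in the finite case while verifying that the profinite ingredients (chiefly the adjunction $(-)\ind{G} \dashv (-)\res{H}$ and the uniqueness half of Lemma \ref{H hom extends by induction}) carry the argument over. For $1 \Rightarrow 2$, given a continuous epimorphism $\beta: V \onto U$ with a continuous $k\db{H}$-splitting $\sigma$, I apply Definition \ref{relprojdefn} to the diagram with $\varphi = \id_U$ and $\sigma$ as the hypothesised $k\db{H}$-lift; the resulting $k\db{G}$-lift is the desired $k\db{G}$-splitting of $\beta$. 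For $2 \Rightarrow 3$, I use the counit $\varepsilon_U : U\res{H}\ind{G} \to U$, $g \ctens u \mapsto gu$, of the restriction/induction adjunction. This is a continuous $k\db{G}$-epimorphism, and it always splits as a $k\db{H}$-homomorphism via $u \mapsto 1 \ctens u$ by the triangle identities. Invoking (2) yields a $k\db{G}$-splitting and exhibits $U$ as a summand of $U\res{H}\ind{G}$. The implication $3 \Rightarrow 4$ is immediate, taking $V = U\res{H}$.

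For $4 \Rightarrow 1$ I split the argument into two substeps. Substep (a) is to show that every induced module $V\ind{G}$ is relatively $H$-projective. Given a diagram $V\ind{G} \xrightarrow{\varphi} W$ with $\beta: V' \onto W$ and a continuous $k\db{H}$-lift $\rho_0 : V\ind{G} \to V'$ of $\varphi$, I restrict $\rho_0$ to $V \cong 1 \ctens V \subseteq V\ind{G}$ to obtain a continuous $k\db{H}$-map $V \to V'\res{H}$, and apply Lemma \ref{H hom extends by induction} to extend uniquely to a continuous $k\db{G}$-homomorphism $\tilde\rho : V\ind{G} \to V'$. Both $\beta\tilde\rho$ and $\varphi$ are continuous $k\db{G}$-homomorphisms $V\ind{G} \to W$ whose restrictions to $1 \ctens V$ coincide with $\beta\rho_0|_V = \varphi|_V$, so the uniqueness clause of the same lemma forces $\beta\tilde\rho = \varphi$. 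Substep (b) is that a direct summand $U_1$ of a relatively $H$-projective module $U = U_1 \oplus U_2$ is again relatively $H$-projective: given a lifting diagram for $U_1$ with $k\db{H}$-lift $\rho_0$, precompose $\varphi$ and $\rho_0$ with the (continuous) $k\db{G}$-projection $\pi_1 : U \onto U_1$ to obtain a diagram for $U$ with $k\db{H}$-lift $\rho_0\pi_1$, apply relative $H$-projectivity of $U$ to produce a continuous $k\db{G}$-lift $\bar\rho : U \to V'$, and set $\rho = \bar\rho\circ\iota_1$ where $\iota_1 : U_1 \incl U$ is the (continuous) inclusion. A direct computation gives $\beta\rho = \varphi$. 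Combining (a) and (b) closes the cycle.

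The main obstacle is substep (a) in $4 \Rightarrow 1$: the equality $\beta\tilde\rho = \varphi$ is not obvious from construction and relies on the uniqueness of continuous extension along $V \incl V\ind{G}$, which is the content of Lemma \ref{H hom extends by induction}. Everything else is essentially formal once one records that projections onto summands of profinite modules, the unit $\eta$, and the counit $\varepsilon$ of the adjunction are automatically continuous, so no separate topological verifications are required beyond the references already made.
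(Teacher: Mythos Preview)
Your proof is correct and follows exactly the approach the paper has in mind: the paper's own proof simply says ``This is proved just as for finite groups so the details are omitted.  At several points we require \ref{H hom extends by induction},'' and your cyclic argument $1\Rightarrow 2\Rightarrow 3\Rightarrow 4\Rightarrow 1$ is precisely that standard finite-group proof with the adjunction lemma invoked at the right places.
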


\begin{proof}
This is proved just as for finite groups so the details are omitted.
At several points we require \ref{H hom
extends by induction}.
\end{proof}

We give now two very useful characterizations of finitely generated
profinite $k\db{G}$-modules.  As is standard, we write $U\ds W$ to
mean that the profinite module $U$ is isomorphic to a direct summand
of the profinite module $W$ - of course we insist that the splitting
maps are continuous.

\begin{prop} \label{relproj-H-iff-HN}
Let $U$ be a finitely generated profinite $k\dbl G\dbr$-module, and
$H\leq_C G$.  Then $U$ is relatively $H$-projective if and only if
$U$ is relatively $HN$-projective for every $N\lhd_O G$.
\end{prop}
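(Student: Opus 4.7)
The plan is to derive both implications from Lemma \ref{relprojchar1}.

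The forward direction is essentially transitivity of induction. If $U$ is relatively $H$-projective then by Lemma \ref{relprojchar1}(4) it is a summand of a module $V\ind{G}$ induced from some $k\db{H}$-module $V$. Since $V\ind{G} \iso (V\ind{HN})\ind{G}$ is also induced from a $k\db{HN}$-module, the same characterisation gives that $U$ is relatively $HN$-projective.

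For the reverse direction, I would mimic the limit-of-nonempty-compacts argument used in Proposition \ref{U_N indec}. The family $\{HN\,|\,N\lhd_O G\}$ is a filter base of open subgroups of $G$ with intersection $H$, so by Lemma \ref{ResInd inverse limit} we have $U\res{H}\ind{G} \iso \invlim_N U\res{HN}\ind{G}$, with the counit $\varepsilon: U\res{H}\ind{G} \to U$ being the limit of the counits $\varepsilon_N: U\res{HN}\ind{G} \to U$. The goal is to produce a continuous $k\db{G}$-splitting of $\varepsilon$, for then $U \ds U\res{H}\ind{G}$ and Lemma \ref{relprojchar1}(3) applies.

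For each $N$ the unit of the induction-restriction adjunction is a $k\db{HN}$-splitting of $\varepsilon_N$, so by relative $HN$-projectivity and Lemma \ref{relprojchar1}(2) a continuous $k\db{G}$-splitting $\sigma_N$ exists. Let $P_N$ be the set of all such splittings. Because $U$ is finitely generated, Corollary \ref{fg homs profinite} makes $\Hom_{k\db{G}}(U,U\res{HN}\ind{G})$ profinite, and $P_N$, cut out by the closed condition $\varepsilon_N\sigma = \id_U$, is a closed nonempty (hence compact) subspace. For $N_1 \leq N_2$ the canonical surjection $U\res{HN_1}\ind{G} \to U\res{HN_2}\ind{G}$ intertwines the $\varepsilon_N$'s, so post-composition turns $\{P_N\}$ into an inverse system of nonempty compact spaces. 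By \cite[1.1.4]{ribzal} its limit is nonempty; any element $(\sigma_N)$ assembles by the universal property of inverse limits into a continuous $k\db{G}$-map $\sigma: U \to U\res{H}\ind{G}$ with $\varepsilon\sigma = \id_U$.

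The main obstacle is this last step: individual splittings $\sigma_N$ exist by hypothesis, but may not be compatible as $N$ varies. The compactness of each $P_N$, which relies crucially on finite generation of $U$ through Corollary \ref{fg homs profinite}, is what rescues the inverse-limit argument and packages a family of splittings into a single coherent one.
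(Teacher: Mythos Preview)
Your proof is correct and follows essentially the same approach as the paper: both use Lemma~\ref{ResInd inverse limit} to write $U\res{H}\ind{G}$ as an inverse limit, observe that each counit $\varepsilon_N$ has a $k\db{HN}$-splitting (the unit) and hence a $k\db{G}$-splitting by hypothesis, and then invoke Corollary~\ref{fg homs profinite} to make the sets of splittings compact so that a coherent family can be extracted from the inverse limit. The paper's argument is virtually identical, differing only in notation.
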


\begin{proof}
The `only if' statement is clear.  We need only show that if $U$ is relatively $HN$-projective for each $N$, then $U$ is relatively $H$-projective.

By \ref{ResInd inverse limit} we have
$U\res{H}\ind{G}\iso\invlim_{N\lhd_O G}
\{U\res{HN}\ind{G},\psi_{MN}\}$. We will form the required splitting
homomorphisms as limits of maps of inverse systems.

For each $N\lhd_O G$ the identity map $U\res{HN}\to U\res{HN}$
extends uniquely to a surjection $\pi_N:U\res{HN}\ind{G}\onto U$ by
\ref{H hom extends by induction}. By checking commutativity of the
relevant diagrams on $U\res{HN}$ it follows that
$\{\pi_N\,|\,N\lhd_O G\}$ is a surjective map of inverse systems.
This map yields a continuous surjective homomorphism
$\pi:U\res{H}\ind{G}\to U$.

We note that the map $U\to U\res{HN}\ind{G}$ given by $u\mapsto
1\ctens u$ is a $k\db{HN}$-homomorphism, and that it splits $\pi_N$.
Hence, since $U$ is $HN$-projective, we have that $\pi_N$ splits as
a $k\db{G}$-homomorphism.  Let $I_N$ denote the non-empty set of
$k\db{G}$-splittings of the map $\pi_N$.

Since $U$ is finitely generated, we have that
$\Hom_{k\db{G}}(U,U\res{HN}\ind{G})$ is compact by \ref{fg homs
profinite}.  Since the map from
$\textnormal{Hom}_{k\db{G}}(U,U\res{HN}\ind{G})$ to
$\textnormal{End}_{k\db{G}}(U)$ given by $\alpha\mapsto\pi_N\alpha$
is continuous, the inverse image of $\textnormal{id}_U$, which is
$I_N$, is closed and hence compact.

The maps $I_N\to I_M$ given by $\iota_N\mapsto \psi_{MN}\iota_N$
whenever $N\leq M$ make the $I_N$ into an inverse system of
non-empty compact sets, and this system has a non-empty inverse
limit by \cite[1.1.4]{ribzal}. By definition an element of this
limit is a compatible map of inverse systems $\{\iota_N\}:U\to
\{U\res{HN}\ind{G}\}$.  This map of systems yields a unique
$k\db{G}$-homomorphism $\iota:U\to U\res{H}\ind{G}$.

Now by the functoriality of $\invlim$ we have
$$\pi\iota=\invlim\pi_N\invlim\iota_N=\invlim\pi_N\iota_N=\invlim
\id_{U}=\id_U$$ so that $U\ds U\res{H}\ind{G}$, as required.
\end{proof}

We can refine this further into a condition relying only on the
finite quotients $U_N$.  The following lemma will help us here and
elsewhere:

\begin{lemma}\label{fg summands of coinvs are summands}
Let $U,W$ be finitely generated profinite $k\db{G}$-modules and let
$\mathcal{N}$ be a cofinal inverse system of open normal subgroups
of $G$.
\begin{itemize}
\item If $U_N\ds W_N$ for each $N\in \mathcal{N}$, then $U\ds W$.
\item If $U_N\iso W_N$ for each $N\in \mathcal{N}$, then $U\iso W$.
\end{itemize}
\end{lemma}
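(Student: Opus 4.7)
My approach mirrors the techniques of Propositions \ref{U_N indec} and \ref{relproj-H-iff-HN}: I will assemble an inverse system of finite non-empty sets whose elements witness the desired splittings (resp. mutually inverse isomorphisms) at each finite level, invoke compactness via \cite[1.1.4]{ribzal} to extract a compatible family, and realize that family as a $k\db{G}$-homomorphism by taking an inverse limit. The key background facts are that $U\iso \invlim_{N\in\mathcal{N}} U_N$ and $W\iso \invlim_{N\in\mathcal{N}} W_N$ (by cofinality of $\mathcal{N}$ together with Proposition \ref{module invlimit of coinvariants}) and that each $U_N$, $W_N$ is finite, since they are finitely generated modules for the finite ring $k\db{G/N}$ over the finite field $k$.

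For the first bullet, I would define, for each $N\in\mathcal{N}$,
$$ S_N = \{(\iota,\pi) \in \Hom_{k\db{G}}(U_N, W_N) \times \Hom_{k\db{G}}(W_N, U_N) \,:\, \pi\iota = \id_{U_N}\}. $$
Each $S_N$ is finite, and is non-empty by hypothesis. For $N\leq M$ in $\mathcal{N}$, the natural isomorphism $(U_N)_M\iso U_M$ of Lemma \ref{coinvariant technicals}(1) together with functoriality of $(-)_M$ produces an induced pair $(\iota',\pi')$ of maps $U_M\to W_M$ and $W_M\to U_M$; since $(-)_M$ preserves composition and identities, $\pi'\iota' = \id_{U_M}$, so $(\iota',\pi')\in S_M$. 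This gives transition maps making $\{S_N\}$ an inverse system of finite non-empty sets compatible with the projections of the systems $\{U_N\}$ and $\{W_N\}$. By \cite[1.1.4]{ribzal} its limit is non-empty, and a point $\{(\iota_N,\pi_N)\}$ of the limit is a pair of compatible maps of inverse systems $\{U_N\}\to\{W_N\}$ and $\{W_N\}\to\{U_N\}$, whose limits $\iota:U\to W$ and $\pi:W\to U$ satisfy $\pi\iota = \invlim_N \id_{U_N} = \id_U$, giving $U\ds W$.

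For the second bullet, I would run the identical argument with
$$ T_N = \{(\alpha,\beta) \in \Hom_{k\db{G}}(U_N, W_N) \times \Hom_{k\db{G}}(W_N, U_N) \,:\, \beta\alpha = \id_{U_N},\ \alpha\beta = \id_{W_N}\}, $$
which is finite and non-empty by the isomorphism hypothesis, producing in the limit mutually inverse continuous $k\db{G}$-homomorphisms between $U$ and $W$.

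The main obstacle is really just the bookkeeping in the second paragraph: one must verify that passing to $M$-coinvariants genuinely gives a well-defined transition map $S_N\to S_M$ whose limit points correspond to morphisms of inverse systems (rather than unrelated collections of maps that happen to lie in the right Hom sets). Once this is in place, everything else is the finite-set compactness argument already used several times in the paper; no genuine topology is needed since $S_N$ and $T_N$ are finite and discrete.
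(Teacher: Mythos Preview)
Your proposal is correct and follows essentially the same compactness argument as the paper: both build an inverse system of finite non-empty sets of splitting data, apply \cite[1.1.4]{ribzal}, and take limits. The only cosmetic difference is that you track pairs $(\iota,\pi)$ in a single system whereas the paper first fixes a compatible family of split surjections $(\pi_N)$ and then, in a second inverse-limit step, chooses compatible sections; for the isomorphism case the paper simply notes that each $\pi_N$ is then injective, hence so is the limit map.
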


\begin{proof}
For each $N\in \mathcal{N}$, let $P_N$ denote the non-empty finite
set of surjections $\pi_N:W_N\onto U_N$ that split.  Whenever $N\leq
M$ define $\gamma_{MN}:P_N\to P_M$ by $\pi_N\mapsto (\pi_N)_M$. This
gives an inverse system of finite non-empty sets. Thus we have a
non-empty inverse limit, and we fix an element $(\pi_N)$ of this
limit.

For each $N$ we have a non-empty finite set $I_N$ of injections
$\iota_N:U_N\to W_N$ splitting $\pi_N$.  As above we have a map
$I_N\to I_M$ since
$$\pi_M(\iota_N)_M=(\pi_N)_M(\iota_N)_M=(\pi_N\iota_N)_M=\id_{U_M}$$
and again we have an inverse system.  An element $(\iota_N)$ of the
limit of this system is a splitting of $(\pi_N)$, and it follows
that $U\ds W$.

The second claim follows from the first by noting (for instance)
that if each map $\pi_N$ is injective, then so is the limit map
$\pi$.
\end{proof}

\begin{prop}\label{Urelproj iff UNrelproj}
Let $U$ be a finitely generated profinite $k\db{G}$-module, and
$H\leq_C G$.  Then $U$ is relatively $H$-projective if and only if
$U_N$ is relatively $HN$-projective for every $N\lhd_O G$.
\end{prop}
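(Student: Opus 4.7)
The plan is to combine Proposition \ref{relproj-H-iff-HN} with a direct comparison between relative $HN$-projectivity of $U$ and relative $HN/N$-projectivity of the coinvariant quotient $U_N$ (viewed as a $k\db{G/N}$-module). The link between the two will be the canonical isomorphism
\begin{equation*}
(U\res{HN}\ind{G})_N \iso U_N\res{HN/N}\ind{G/N},
\end{equation*}
obtained by composing parts 4 and 5 of Lemma \ref{coinvariant technicals} (noting that $HN\cap N = N$).

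For the forward implication, if $U$ is relatively $H$-projective then Proposition \ref{relproj-H-iff-HN} makes it relatively $HN$-projective for every $N\lhd_O G$, and so by Lemma \ref{relprojchar1} we have $U\ds U\res{HN}\ind{G}$. Applying the right exact functor $(-)_N$ preserves this summand relation, giving $U_N\ds (U\res{HN}\ind{G})_N \iso U_N\res{HN/N}\ind{G/N}$, which via Lemma \ref{relprojchar1} again says exactly that $U_N$ is relatively $HN/N$-projective.

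The converse requires more care. Fix $N_0\lhd_O G$; by Proposition \ref{relproj-H-iff-HN}, it suffices to show $U\ds U\res{HN_0}\ind{G}$. For each $N\lhd_O G$ with $N\leq N_0$, the hypothesis gives $U_N\ds U_N\res{HN/N}\ind{G/N}$. Because $HN/N\leq HN_0/N$, transitivity of induction rewrites $U_N\res{HN/N}\ind{G/N}$ as a module induced from the $k\db{HN_0/N}$-module $U_N\res{HN/N}\ind{HN_0/N}$, so condition (4) of Lemma \ref{relprojchar1} upgrades $U_N$ to being $HN_0/N$-projective. That is, $U_N\ds U_N\res{HN_0/N}\ind{G/N}\iso (U\res{HN_0}\ind{G})_N$ by the displayed isomorphism (with $HN_0$ in place of $HN$). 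Since $\{N\lhd_O G: N\leq N_0\}$ is cofinal in the open normal subgroups of $G$ and $U\res{HN_0}\ind{G}$ is finitely generated (as $HN_0$ is open in $G$), Lemma \ref{fg summands of coinvs are summands} delivers $U\ds U\res{HN_0}\ind{G}$, as required.

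I expect the main obstacle to be this converse, where separate $HN$-projectivity statements for each $N$ (a priori involving different subgroups) must be welded into the single $HN_0$-projectivity of $U$. The transitivity step enlarges each local summand relation from $HN/N$ up to the common $HN_0/N$, and then the cofinality argument of Lemma \ref{fg summands of coinvs are summands} glues these individual relations together. By contrast, the forward direction is essentially just the right exactness of $(-)_N$ applied to a single summand relation.
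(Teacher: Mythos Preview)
Your proof is correct and follows essentially the same approach as the paper: the forward direction applies the functor $(-)_N$ to the summand relation $U\ds U\res{HN}\ind{G}$, and the converse fixes a single open normal subgroup (your $N_0$, the paper's $M$), upgrades each $U_N$ from $HN$-projective to $HN_0$-projective via the containment $HN\leq HN_0$, and then invokes Lemma~\ref{fg summands of coinvs are summands} over the cofinal system $\{N\leq N_0\}$ before finishing with Proposition~\ref{relproj-H-iff-HN}. The only differences are cosmetic: you spell out the transitivity step and the finite generation of $U\res{HN_0}\ind{G}$ explicitly, whereas the paper leaves these implicit.
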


\begin{proof}
Fix $N\lhd_O G$.  If $U$ is $H$-projective then $U$ is
$HN$-projective.  Now the functoriality of $(-)_N$ ensures that
$$
U  \ds U\res{HN}\ind{G} \implies  U_N \ds (U\res{HN}\ind{G})_N
\implies  U_N \ds U_N\res{HN}\ind{G}
$$
so that $U_N$ is $HN$-projective.

To show the converse, fix some $M\lhd_O G$. We take the cofinal inverse system of $U_N$
for $N\lhd_O G$ and $N\leq M$, noting that each $U_N$ is relatively
$HM$-projective. We will show that $\invlim U_N=U$ is relatively
$HM$-projective.

By assumption we have $U_N\ds U_N\res{HM/N}\ind{G/N}$ for each $N$
in our inverse system.  But $U_N\res{HM/N}\ind{G/N}\iso
(U\res{HM}\ind{G})_N$ by \ref{coinvariant technicals}, so that for
each $N$ we have $$U_N\ds (U\res{HM}\ind{G})_N$$  and the claim now
follows from \ref{fg summands of coinvs are summands}.  Thus $U$ is
$HM$-projective for each $M$, and the result follows from
\ref{relproj-H-iff-HN}.
\end{proof}

\begin{defn}
If $H\leq_O G$ and $U, W$ are $k\db{G}$-modules, then the
\emph{trace map} $$\textnormal{Tr}_{H,G}:
\textnormal{Hom}_{k\db{H}}(U\res{H},W\res{H})\to
\textnormal{Hom}_{k\db{G}}(U,W)$$ is defined by $$\alpha\mapsto
\sum_{s\in G/H}s\alpha s^{-1}.$$
\end{defn}

For open $H$ the properties of the trace map given in
\cite[3.6.3]{benson} carry through just as for finite groups.  We
now complete our characterization of finitely generated relatively
$H$-projective $k\db{G}$-modules:

\begin{theorem}\label{relprojcharfinished}
Let $G$ be a profinite group, let $H\leq_C G$, and let $U$ be a
finitely generated profinite $k\db{G}$-module.  Then the following
are equivalent:
\begin{enumerate}
\item $U$ is relatively $H$-projective.

\item If ever a continuous $k\db{G}$-epimorphism $V\onto U$ splits as a
$k\db{H}$-module homomorphism, then it splits as a $k\db{G}$-module
homomorphism.

\item $U$ is a direct summand of $U\res{H}\ind{G}$.

\item $U$ is relatively $HN$-projective for every $N\lhd_O G$.

\item $U_N$ is relatively $HN$-projective for every $N\lhd_O G$.

\item $U$ is a direct summand of a module induced from some
profinite $k\db{H}$-module.

\item For every $N\lhd_O G$ there exists a continuous $k\db{HN}$-endomorphism $\alpha_N$ of $U$ such that
\mbox{$\textnormal{id}_U=\textnormal{Tr}_{HN,G}(\alpha_N)$}.
\end{enumerate}
\end{theorem}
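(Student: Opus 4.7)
The bulk of the equivalences in this theorem already follows from material in the paper: Lemma~\ref{relprojchar1} gives (1)~$\Leftrightarrow$~(2)~$\Leftrightarrow$~(3)~$\Leftrightarrow$~(6), Proposition~\ref{relproj-H-iff-HN} gives (1)~$\Leftrightarrow$~(4), and Proposition~\ref{Urelproj iff UNrelproj} gives (1)~$\Leftrightarrow$~(5). So my plan is to dispose of conditions (1)--(6) in one sentence citing these results, and then concentrate entirely on producing the circular implications between (4) and (7). Since $H$ is closed and $N \lhd_O G$, the subgroup $HN$ is open in $G$ with $[G:HN]<\infty$, so $\tn{Tr}_{HN,G}$ is defined and the completed induction $U\res{HN}\ind{G}$ coincides with a finite direct sum $\bigoplus_{s\in G/HN} s\ctens U$ of copies of $U$ (since $k\db{G}$ is finitely generated free as a right $k\db{HN}$-module). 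Both implications then reduce to Higman's classical trace argument, and I would carry it out as follows.

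For (4) $\Rightarrow$ (7): fix $N\lhd_O G$. By (4), $U$ is relatively $HN$-projective, so by \ref{relprojchar1} we have a continuous $k\db{G}$-splitting $\iota:U\to U\res{HN}\ind{G}$ of the natural map $\pi:U\res{HN}\ind{G}\to U$ from \ref{H hom extends by induction}. Let $\tau:U\res{HN}\ind{G}\to U$ be the $k\db{HN}$-linear projection onto the summand $1\ctens U$ (continuous because the sum is finite), and set $\alpha_N=\tau\iota\in\End_{k\db{HN}}(U)$. A direct computation, using only that $\iota$ is $k\db{G}$-linear and that $\pi$ agrees with the sum of the conjugates of $\tau$, shows $\tn{Tr}_{HN,G}(\alpha_N)=\id_U$.

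For (7) $\Rightarrow$ (4): fix $N\lhd_O G$ and let $\alpha_N\in\End_{k\db{HN}}(U)$ be as given. By \ref{relprojchar1} it suffices to split any continuous $k\db{G}$-epimorphism $\beta:V\onto U$ that admits a continuous $k\db{HN}$-splitting $\sigma:U\to V$. Define
$$\tilde\sigma:U\to V,\qquad \tilde\sigma(u)=\sum_{s\in G/HN} s\,\sigma\,\alpha_N(s^{-1}u).$$
The sum is finite and each summand is continuous, so $\tilde\sigma$ is continuous; a standard coset-reindexing argument shows $\tilde\sigma$ is $k\db{G}$-linear (it does not depend on the choice of transversal), and the hypothesis $\id_U=\tn{Tr}_{HN,G}(\alpha_N)$ together with $\beta\sigma=\id_U$ yields $\beta\tilde\sigma=\id_U$. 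Thus $\beta$ splits, proving $U$ is relatively $HN$-projective.

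There is no real obstacle here: the one point worth flagging is that continuity must be tracked throughout — this is automatic because the trace and the splitting maps are finite sums of continuous maps — but conceptually the theorem is just the observation that for open subgroups of finite index the classical Higman machinery transfers unchanged, and the subtler profinite ingredients have all been absorbed into the earlier characterizations (1)--(6).
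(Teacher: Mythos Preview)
Your proposal is correct and essentially matches the paper's proof: the equivalences (1)--(6) are dispatched by citing \ref{relprojchar1}, \ref{relproj-H-iff-HN}, and \ref{Urelproj iff UNrelproj}, and the link with (7) is handled by Higman's trace argument applied to the open subgroup $HN$. The only cosmetic difference is that the paper closes the loop via (6)~$\Rightarrow$~(7) (using transitivity $X\ind{G}\iso X\ind{HN}\ind{G}$ to see $U$ as a summand of a module induced from $HN$) rather than your (4)~$\Rightarrow$~(7), but both routes feed the same input into the same Higman computation.
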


\begin{proof}
The equivalence of statements 1,2,3,4,5 and 6 follows from results above.  That 6 implies 7 is shown as with the finite
proof \cite[3.6.4]{benson} after using the transitivity property
$X\ind{G}\iso X\ind{HN}\ind{G}$, where $X$ is the $k\db{H}$-module
coming from 6.  The proof that 7 implies 4 also mimics the finite case
\cite[3.6.4]{benson}.
\end{proof}

\section{Vertices}\label{vertex section}

Our definition for vertex is again in direct analogy with the
corresponding definition when the group $G$ is finite:

\begin{defn}
Let $U$ be a finitely generated indecomposable profinite
$k\db{G}$-module. A vertex $Q$ of $U$ is a closed subgroup of $G$
with respect to which $U$ is relatively projective, but such that
$U$ is not projective relative to any proper closed subgroup of $Q$.
\end{defn}

Unlike in the finite case, we must check that a vertex of $U$
exists.  We do this using the following lemma, which is useful in
other situations:

\begin{lemma}\label{projective relative to intersection}
Let $G$ be a profinite group and let $\mathcal{W}=\{W_i\,|\, i\in
I\}$ be a filter base of closed subgroups of $G$.  Let $U$ be a
finitely generated profinite $k\db{G}$-module that is projective
relative to each of the $W_i$. Then $U$ is projective relative to
$W=\bigcap_{i\in I}W_i$.
\end{lemma}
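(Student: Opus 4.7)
The plan is to reduce to the open subgroup case via Theorem \ref{relprojcharfinished}, then use compactness and the filter base property to ``replace'' the intersection $W$ by a single $W_i$ modulo each open normal subgroup.

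By the equivalence of (1) and (4) in Theorem \ref{relprojcharfinished}, proving that $U$ is relatively $W$-projective amounts to proving that $U$ is relatively $WN$-projective for every $N\lhd_O G$. So I fix such an $N$ and aim to produce a single $W_{i_0}$ in the filter base with $W_{i_0}N=WN$. Once that is in hand the proof closes immediately: $U$ is relatively $W_{i_0}$-projective by hypothesis, and relative projectivity passes to any larger closed subgroup (any $k\db{W_{i_0}N}$-homomorphism is in particular a $k\db{W_{i_0}}$-homomorphism, so the lifting property of Definition \ref{relprojdefn} transfers directly), hence $U$ is relatively $W_{i_0}N=WN$-projective.

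The heart of the argument is the claim that such an $i_0$ exists. Passing to the finite quotient $G/N$, the subgroups $W_iN/N$ form a filter base of subgroups of $G/N$, and since $G/N$ has only finitely many subgroups this filter base attains a minimum: there is some $i_0\in I$ with $W_{i_0}N\subseteq W_iN$ for every $i\in I$. The inclusion $WN\subseteq W_{i_0}N$ is automatic since $W\subseteq W_{i_0}$, so only the reverse inclusion requires work. Given $g\in W_{i_0}N$, the coset $gN$ lies in $W_{i_0}N/N\subseteq W_iN/N$ for every $i$, so $gN$ meets each $W_i$. Thus $\{gN\cap W_i\}_{i\in I}$ is a filter base of nonempty closed subsets of the compact coset $gN$, so by compactness its total intersection $gN\cap W$ is nonempty, placing $g$ in $WN$.

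The main obstacle is precisely this last compactness step, where the filter base hypothesis on $\{W_i\}$ is used in an essential way; without it one could not conclude that $WN$ is itself of the form $W_iN$ at any finite level. Everything else is formal manipulation of the characterizations already in hand, and the whole argument illustrates the utility of Theorem \ref{relprojcharfinished} in reducing relative projectivity questions about arbitrary closed subgroups to statements about finite groups $G/N$.
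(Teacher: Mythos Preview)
Your proof is correct and follows essentially the same route as the paper's: both reduce via Proposition~\ref{relproj-H-iff-HN} (equivalently, the (1)$\Leftrightarrow$(4) part of Theorem~\ref{relprojcharfinished}) to showing $U$ is relatively $WN$-projective for each $N\lhd_O G$, and both do this by exhibiting a single $W_j$ in the filter base with $W_jN=WN$. The only real difference is that the paper outsources the identity $(\bigcap_i W_i)N=\bigcap_i W_iN$ to \cite[0.3.1(h)]{wilson}, whereas you prove the needed inclusion by hand via the compactness argument on the coset $gN$; your version is self-contained and makes the role of the filter base hypothesis more transparent.
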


\begin{proof}
By \ref{relproj-H-iff-HN} it suffices to show that $U$ is relatively
$WN$-projective for arbitrary $N\lhd_O G$, so fix some such $N$.
From \cite[0.3.1(h)]{wilson} we have
$$WN=(\bigcap W_i)N=\bigcap W_iN.$$
The set $\{W_iN\,|\, i\in I\}$ is finite and thus for some
$W_{i1},\hdots,W_{in}\in \mathcal{W}$ we have
$$WN=W_{i1}N\cap\hdots\cap W_{in}N=(W_{i1}\cap\hdots\cap W_{in})N.$$
But now by hypothesis there is some $W_j\in \mathcal{W}$ with
$W_j\subseteq W_{i1}\cap\hdots\cap W_{in}$ so that $WN=W_jN$ for
some $j\in I$.  The result follows.
\end{proof}

\begin{corol}\label{vertex exists}
If $U$ is an indecomposable finitely generated profinite
$k\db{G}$-module, then a vertex of $U$ exists.
\end{corol}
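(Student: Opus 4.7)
The plan is to apply Zorn's lemma to the set $\mathcal{S}$ of closed subgroups $W\leq_C G$ with respect to which $U$ is relatively projective, partially ordered by reverse inclusion. The set $\mathcal{S}$ is non-empty because $G\in\mathcal{S}$ (trivially $U\res{G}\ind{G}=U$, so $U$ is relatively $G$-projective by \ref{relprojchar1}). A maximal element of $\mathcal{S}$ under reverse inclusion is, by definition, a closed subgroup $Q$ such that $U$ is relatively $Q$-projective but is not relatively projective with respect to any proper closed subgroup of $Q$ --- that is, a vertex of $U$.

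The substance of the argument is the verification of Zorn's hypothesis: every chain in $\mathcal{S}$ must have an upper bound under reverse inclusion. Given such a chain $\{W_i\,|\,i\in I\}\subseteq\mathcal{S}$, any two of its elements are comparable under inclusion, so the family is in particular a filter base of closed subgroups of $G$. Lemma \ref{projective relative to intersection} then delivers exactly what is required: $U$ is projective relative to the closed subgroup $W:=\bigcap_{i\in I}W_i$, so $W\in\mathcal{S}$, and $W\subseteq W_i$ for every $i$. Hence $W$ is an upper bound for the chain in the reverse-inclusion ordering, and Zorn's lemma produces a maximal element of $\mathcal{S}$, which is a vertex of $U$.

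The only non-trivial point --- the passage from relative projectivity with respect to each $W_i$ to relative projectivity with respect to the intersection $W$ --- has already been isolated and proved as Lemma \ref{projective relative to intersection}, which in turn rested on \ref{relproj-H-iff-HN} and a compactness-style argument inside the finite set $\{W_iN\,|\,i\in I\}$. Once that lemma is in hand the existence of a vertex is a routine Zorn's lemma exercise; indecomposability of $U$ plays no role at this stage (it is built into the \emph{definition} of vertex but is not needed for existence).
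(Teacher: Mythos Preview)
Your proof is correct and follows essentially the same approach as the paper: both apply Zorn's lemma to the set of closed subgroups relative to which $U$ is projective, using Lemma \ref{projective relative to intersection} to handle chains via their intersection. The only cosmetic difference is that the paper orders by inclusion and seeks a minimal element, whereas you use reverse inclusion and seek a maximal element.
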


\begin{proof}
Demonstrating the existence of a vertex amounts to showing that the
set $\mathcal{I}$ of closed subgroups of $G$ with respect to which
$U$ is relatively projective has a minimal element.

The set $\mathcal{I}$ is a partially ordered set when ordered by inclusion.  We need only show that any chain $\mathcal{J}$ in
$\mathcal{I}$ has a lower bound in $\mathcal{I}$, and then Zorn's
lemma gives us that $\mathcal{I}$ has a minimal element $Q$.  But
from \ref{projective relative to intersection} it follows that $U$
is projective relative to $R=\bigcap\{H\,|\, H\in \mathcal{J}\}$.
Thus $R$ is a lower bound for $\mathcal{J}$ and the result follows.
\end{proof}

Our main result for this section is that two vertices of a finitely
generated indecomposable $k\db{G}$-module $U$ are conjugate by an
element of $G$.  To prove this we require that $U$ have local
endomorphism ring. This is known when $G$ is virtually pro-$p$
\cite[2.1]{symondspermcom2} but by observing that profinite modules
are pure injective, we easily obtain the result for general $G$:

\begin{prop}\label{fg module has LER}
Let $G$ be a profinite group and let $U$ be an indecomposable
finitely generated $k\db{G}$-module.  Then $U$ has local
endomorphism ring.
\end{prop}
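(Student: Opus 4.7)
My plan, following the hint in the preamble of the proposition, is to prove that $U$ is pure injective (equivalently, algebraically compact) as an abstract $k\db{G}$-module, and then quote the classical ring-theoretic result that an indecomposable pure injective module has local endomorphism ring (see for instance Prest's \emph{Model Theory and Modules}, or Zimmermann's original work on algebraically compact modules). The profinite content of the argument is then reduced to a single topological observation.

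Recall that $U$ is algebraically compact if every finitely solvable system of $k\db{G}$-linear equations $\{\sum_{j \in J} r_{ij} x_j = u_i\}_{i \in I}$, each with only finitely many nonzero $r_{ij}$ and with right-hand side $u_i$ in $U$, admits a simultaneous solution in $U^J$. To establish this, I would observe that the solution set of a single such equation is closed in $U^J$: using that each $r_{ij}$ acts continuously on $U$ and addition is continuous, the equation defines a continuous map $U^J \to U$, and the solution locus is the preimage of the point $u_i$. Since $U$ is compact, $U^J$ is compact by Tychonoff. Finite solvability of the system translates exactly into the finite intersection property for this family of closed sets, and compactness then supplies a tuple solving every equation simultaneously.

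Once $U$ is known to be pure injective, the classical theorem gives that the abstract $k\db{G}$-module endomorphism ring of $U$ is local. To transfer this conclusion to the ring $\End_{k\db{G}}(U)$ of continuous $k\db{G}$-module endomorphisms, I would use that $U$ is compact Hausdorff, so any continuous $k\db{G}$-linear bijection $U\to U$ is automatically a homeomorphism; hence a continuous endomorphism is a unit in $\End_{k\db{G}}(U)$ if and only if it is a unit in the abstract endomorphism ring. Locality of the abstract ring then descends to $\End_{k\db{G}}(U)$. The only substantive external input is the theorem that an indecomposable pure injective module has local endomorphism ring; the main (and rather mild) obstacle in the proof itself is simply setting up the algebraic-compactness characterization cleanly enough that Tychonoff does the work.
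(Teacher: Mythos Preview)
Your approach is essentially the paper's own: establish that $U$ is pure-injective and then invoke the classical theorem that an indecomposable pure-injective module has local endomorphism ring. Where the paper simply cites Warfield for pure-injectivity, you spell out the Tychonoff argument for algebraic compactness directly, which is perfectly fine; and where the paper identifies the abstract and continuous endomorphism rings outright, you instead argue that units in the two rings coincide and that locality therefore passes to the subring of continuous maps. That last step is also correct (use the characterisation ``$x$ or $1-x$ is a unit'').

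There is, however, one point you have glossed over. The classical theorem you quote requires $U$ to be indecomposable \emph{as an abstract module}, whereas the hypothesis only gives you indecomposability in the topological category (no nontrivial continuous splitting). The paper addresses this explicitly: since $U$ is finitely generated, any abstract direct summand would itself be finitely generated and hence profinite by \cite[7.2.2]{wilson}, so an abstract decomposition is already a topological one. In fact \cite[7.2.2]{wilson} gives more: every abstract $k\db{G}$-endomorphism of a finitely generated profinite module is automatically continuous, so the abstract and continuous endomorphism rings simply coincide, which disposes of both the indecomposability issue and your unit-comparison step in one stroke. Note that this is exactly where the finite-generation hypothesis is used; your pure-injectivity argument does not need it.
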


\begin{proof}
Let $E=\End_{k\db{G}}(U)$ be the ring of continuous
$k\db{G}$-endomorphisms of $U$, and note that by
\cite[7.2.2]{wilson} this ring coincides with the ring of abstract
$k\db{G}$-endomorphisms of $U$.   If $W$ were an abstract summand of
$U$ then $W$ would be finitely generated and hence profinite.  It
follows that $U$ is indecomposable as an abstract $k\db{G}$-module.
A profinite module is compact in the sense of \cite{warfield} and so
it follows from \cite[Theorem 2]{warfield} that $U$ is
pure-injective.

Now \cite[2.27]{facchini} tells us that the abstract endomorphism
ring of an abstract indecomposable pure-injective module is a local
ring. In particular, $E$ is a local ring.
\end{proof}

The relevance of this proposition is the following well-known general result.  We include a short proof for the reader's convenience.

\begin{lemma}\label{LER implies U summand of V or W}
Let $R$ be a ring with 1 and let $U,V,W$ be $R$-modules, where $U$ has local endomorphism ring.  If $U\ds (V\oplus W)$, then $U\ds V$ or $U\ds W$.
\end{lemma}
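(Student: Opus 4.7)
The plan is to use the standard decomposition-of-identity trick exploiting locality of the endomorphism ring. Let $\iota:U\to V\oplus W$ and $\pi:V\oplus W\to U$ be the continuous maps realizing $U$ as a summand, so $\pi\iota=\id_U$. Let $i_V,i_W$ and $p_V,p_W$ denote the canonical inclusions and projections for the direct sum. Define endomorphisms of $U$ by
\[
\alpha = \pi\,i_V\,p_V\,\iota, \qquad \beta = \pi\,i_W\,p_W\,\iota.
\]
Since $i_V p_V + i_W p_W = \id_{V\oplus W}$, we obtain $\alpha+\beta=\id_U$.

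Now I would invoke locality of $E:=\End_R(U)$. In a local ring, the set of non-units is a (two-sided) ideal, so the sum of two non-units is a non-unit. Because $\id_U$ is a unit, at least one of $\alpha,\beta$ must be a unit in $E$. Without loss of generality suppose $\alpha$ is a unit, with inverse $\gamma\in E$.

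The key observation is then that $\alpha$ factors as $(\pi i_V)\circ(p_V\iota)$, so
\[
(\gamma\,\pi\,i_V)\circ(p_V\,\iota)=\gamma\alpha=\id_U.
\]
Thus $p_V\iota:U\to V$ is a split monomorphism with retraction $\gamma\pi i_V:V\to U$, which is precisely the statement $U\ds V$. Symmetrically, if $\beta$ is the unit, we get $U\ds W$. No step is a real obstacle here; the only thing to keep in mind is that in the profinite setting the maps in question are automatically continuous since they are composed from the given continuous splitting and the canonical maps of the direct sum, and inversion in the local ring $E$ preserves continuity as $E$ coincides with the abstract endomorphism ring by Proposition \ref{fg module has LER}. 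Since the lemma is stated for an arbitrary ring $R$ however, no continuity considerations enter the argument.
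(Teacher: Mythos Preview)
Your proof is correct and essentially identical to the paper's: both decompose $\id_U$ as a sum of two endomorphisms via the projections onto $V$ and $W$, use locality to conclude one summand is a unit, and then reparenthesize the factorization to exhibit the splitting. The only differences are notational, and your closing remark about continuity is an accurate (if unnecessary) observation.
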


\begin{proof}
Whenever $X$ is isomorphic to a summand of $V\oplus W$, let $\pi_X, \iota_X$ denote splitting maps in the obvious way.  We have
$$\id_U=\pi_U(\iota_V\pi_V+\iota_W\pi_W)\iota_U=\pi_U\iota_V\pi_V\iota_U + \pi_U\iota_W\pi_W\iota_U$$
and since $U$ has local endomorphism ring (so in particular the non-units form an additive group), one of the summands on the right hand side (the first, say) is invertible.  Thus $\id_U=\pi_U\iota_V\pi_V\iota_U\gamma$ for some $\gamma\in \End(U)$, and now parenthesizing as $\id_U=(\pi_U\iota_V)(\pi_V\iota_U\gamma)$ demonstrates that $U\ds V$.
\end{proof}

\begin{theorem} \label{verticesconjugate}
Let $G$ be a profinite group, $U$ an indecomposable finitely
generated $k\db{G}$-module, and let $Q,R$ be vertices of $U$.  Then
there exists $x\in G$ such that $Q=xRx^{-1}$.
\end{theorem}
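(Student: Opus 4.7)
The plan is to adapt the classical Mackey-decomposition proof of vertex uniqueness from the finite setting, using the open approximations $QN$ and $RN$ for $N\lhd_O G$ and then gluing the pieces with a compactness argument in $G$.

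Fix $N \lhd_O G$.  Since $QN$ and $RN$ are open (hence finite-index) in $G$, the decomposition of $k\db{G}$ as a finite direct sum of right $k\db{RN}$-cosets makes the completed tensor product reduce to the ordinary one, so the standard Mackey formula
$$U\res{RN}\ind{G}\res{QN} \iso \bigoplus_{x\in QN\backslash G/RN}({}^xU)\res{xRNx^{-1}\cap QN}\ind{QN}$$
goes through exactly as in the finite case.  By \ref{relprojcharfinished}(4), $U\ds U\res{RN}\ind{G}$ and $U \ds U\res{QN}\ind{G}$; restricting the first to $QN$ and then inducing back to $G$ by transitivity of induction yields
$$U \ds U\res{QN}\ind{G} \ds \bigoplus_{x\in QN\backslash G/RN} ({}^xU)\res{xRNx^{-1}\cap QN}\ind{G}.$$
Since $U$ has local endomorphism ring by \ref{fg module has LER}, iterating Lemma \ref{LER implies U summand of V or W} over this finite sum produces some $x_N\in G$ with $U\ds ({}^{x_N}U)\res{x_NRNx_N^{-1}\cap QN}\ind{G}$, so $U$ is relatively $(x_NRNx_N^{-1}\cap QN)$-projective.

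Let $T_N$ be the set of $x\in G$ for which $U$ is relatively $(xRNx^{-1}\cap QN)$-projective.  Since relative $H$-projectivity is preserved when $H$ is replaced by a $G$-conjugate (one checks this directly from \ref{relprojcharfinished}(7)), the condition depends only on the double coset $QNxRN$, so $T_N$ is a non-empty finite union of clopen double cosets.  For $M\leq N$, the containment $xRMx^{-1}\cap QM \leq xRNx^{-1}\cap QN$ together with the fact that relative projectivity is inherited by overgroups gives $T_M\subseteq T_N$, so $\{T_N : N\lhd_O G\}$ is a filter base of non-empty closed subsets of the compact space $G$.  By compactness there exists some $x\in \bigcap_N T_N$.

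This $x$ makes $U$ relatively $(xRNx^{-1}\cap QN)$-projective for every $N\lhd_O G$.  Since $Q$ and $R$ are closed, $Q=\bigcap_N QN$ and $R=\bigcap_N RN$, and hence $\bigcap_N(xRNx^{-1}\cap QN)=xRx^{-1}\cap Q$; Lemma \ref{projective relative to intersection} then gives that $U$ is relatively $(xRx^{-1}\cap Q)$-projective.  Since $xRx^{-1}\cap Q\leq Q$ and $Q$ is a vertex, the minimality of $Q$ forces $Q\leq xRx^{-1}$.  Running the symmetric argument produces $y\in G$ with $R\leq yQy^{-1}$, and setting $z=xy$ we obtain $Q\leq xRx^{-1}\leq zQz^{-1}$.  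In each finite quotient $G/N$, $QN/N\leq zQNz^{-1}/N$ is an inclusion of finite subgroups of equal order, hence an equality; therefore $zQz^{-1}\leq zQNz^{-1}=QN$ for every $N$, giving $zQz^{-1}\leq \bigcap_N QN=Q$.  Combined with $Q\leq zQz^{-1}$, this forces $zQz^{-1}=Q$, and the chain $Q\leq xRx^{-1}\leq zQz^{-1}=Q$ collapses to $Q=xRx^{-1}$.  The main obstacle is transferring the finite-level Mackey argument across all open normal $N$ coherently; the compactness-of-$T_N$ step is what lets us pick a single element $x\in G$ that works simultaneously for every $N$.
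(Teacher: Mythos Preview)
Your argument is correct and follows the same overall skeleton as the paper's proof: Mackey decomposition at the open level $RN$, local endomorphism ring to select one double-coset term, a compactness argument in $G$ to find a single conjugating element that works for every $N$, and then the symmetric sandwich $Q\leq xRx^{-1}\leq zQz^{-1}$.

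There is one structural difference worth noting. The paper restricts to the closed subgroup $Q$ (rather than $QN$), so the Mackey step already yields that $U$ is relatively $(Q\cap sRNs^{-1})$-projective; minimality of $Q$ is then invoked \emph{before} the compactness argument to define $C_N=\{s:Q\subseteq sRNs^{-1}\}$, and the passage from $Q\subseteq xRNx^{-1}$ for all $N$ to $Q\subseteq xRx^{-1}$ is just the elementary intersection $\bigcap_N xRx^{-1}N=xRx^{-1}$. You instead restrict to $QN$, which forces you to carry the weaker conclusion that $U$ is $(xRNx^{-1}\cap QN)$-projective through the compactness step, and then invoke Lemma~\ref{projective relative to intersection} to descend to $xRx^{-1}\cap Q$ before finally applying minimality. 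Both orderings work; yours stays entirely in the world of open subgroups until the last moment and makes pleasant reuse of Lemma~\ref{projective relative to intersection}, while the paper's avoids that lemma at the cost of needing a Mackey formula valid for restriction to a closed subgroup (it cites Symonds for this). Your explicit finite-quotient argument for $Q=zQz^{-1}$ is also a bit more self-contained than the paper's appeal to profinite groups being ``well behaved under conjugation''.
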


\begin{proof}
The module $U$ is relatively $R$-projective so is relatively
$RN$-projective for every open normal subgroup $N$ of $G$.  Fix some
such $N$.   Since $U\ds U\res{RN}\ind{G}$ and $U\ds U\res{Q}\ind{G}$
we have that $U$ is a direct summand of
$$U\res{RN}\ind{G}\res{Q}\ind{G} \iso\bigoplus_{s\in Q\backslash G/RN}
s(U\res{RN})\res{Q\cap sRNs^{-1}}\ind{G}$$ where the above sum
(coming from the Mackey decomposition formula
\cite[2.2]{symondsdoublecoset}) make sense since $RN$ is open so the
set of double coset representatives is finite.  But now since $U$
has local endomorphism ring, \ref{LER implies U summand of V or W} shows that
$$U\ds s(U\res{RN})\res{Q\cap sRNs^{-1}}\ind{G}$$
for some $s\in G$.  Thus $U$ is relatively $Q\cap
sRNs^{-1}$-projective.  But $Q$ is minimal, so we must have
$Q\subseteq sRNs^{-1}$.

Denote by $C_N$ the set of all $s\in G$ such that $Q\subseteq
sRNs^{-1}$.  Since $C_N$ is a union of sets of the form $QgRN$ for
appropriate $g\in G$, it follows that each $C_N$ is closed in $G$.
We thus have a collection of closed, non-empty sets
$\{C_N\,|\,N\lhd_O G\}$ and we wish to show that their intersection
is non-empty.  Let $N_1,\hdots, N_n$ be open normal subgroups of
$G$. Then $N_1\cap\hdots\cap N_n \lhd_O G$ and so by the previous
argument $C_{N_1\cap\hdots\cap N_n}\neq \emptyset$. This means that
there exists $s\in G$ such that $Q\subseteq sR(N_1\cap\hdots\cap
N_n)s^{-1}$ so that certainly for each $i\in\{1,2,\hdots, n\}$ we
have $Q\subseteq sRN_i s^{-1}$.  So $C_{N_1\cap\hdots\cap
N_n}\subseteq C_{N_1}\cap\hdots\cap C_{N_n}$ and thus
$C_{N_1}\cap\hdots\cap C_{N_n}\not=\emptyset$.  By compactness we
now have $\bigcap_N C_N$ is non-empty.

It follows that there is some $x\in G$ such that
\begin{align*}
Q &\subseteq xRNx^{-1}     &\forall N\lhd_O G     \\
Q &\subseteq \bigcap\{xRx^{-1}N\,|\, N\lhd_O G\}     \\
Q &\subseteq xRx^{-1} & \hbox{by }\cite[0.3.3]{wilson}.
\end{align*}
Repeating the same argument with $Q$ and $R$ interchanged, we find
$y\in G$ such that $R\subseteq yQy^{-1}$.

But now $Q\subseteq xRx^{-1}\subseteq (xy)Q(xy)^{-1}$.  Since
profinite groups are well behaved under conjugation it follows that
$Q=(xy)Q(xy)^{-1}$, and so $Q=xRx^{-1}$ as required.
\end{proof}

For the background Sylow theory we require for the following results
see \cite[Chapter 2]{wilson}.

\begin{prop}\label{general Sylow-proj}
If $H$ is a closed subgroup of a profinite group $G$ containing a
$p$-Sylow subgroup of $G$, then any finitely generated profinite
$k\db{G}$-module $U$ is relatively $H$-projective.
\end{prop}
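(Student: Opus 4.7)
The plan is to reduce to the open case via Theorem \ref{relprojcharfinished} and then apply the trace-map criterion, exactly as in the classical finite setting but using profinite Sylow theory to guarantee invertibility of the relevant index.

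First I would invoke the equivalence (1) $\Leftrightarrow$ (4) in Theorem \ref{relprojcharfinished}: it is enough to show that $U$ is relatively $HN$-projective for each $N\lhd_O G$. So fix such an $N$. Since $N$ is open in $G$, the subgroup $HN$ is open, and in particular $[G:HN]$ is finite. By hypothesis $H$ contains a $p$-Sylow subgroup $P$ of $G$, hence so does $HN$. In the profinite sense the supernatural index $[G:P]$ is coprime to $p$, and the (now ordinary) integer $[G:HN]$ divides this supernatural number; consequently $[G:HN]$ is a positive integer coprime to $p$. This is the only non-routine input — everything else is formal — and it is where the Sylow hypothesis is used.

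Now I would apply the trace-map characterization (1) $\Leftrightarrow$ (7) of Theorem \ref{relprojcharfinished}. Because $[G:HN]$ is invertible in $k$, we may set
$$\alpha_N = [G:HN]^{-1}\cdot \id_U,$$
which is certainly a continuous $k\db{HN}$-endomorphism of $U$ (in fact a continuous $k\db{G}$-endomorphism). A direct calculation gives
$$\tn{Tr}_{HN,G}(\alpha_N)=\sum_{s\in G/HN} s\alpha_N s^{-1}=[G:HN]\cdot [G:HN]^{-1}\cdot\id_U=\id_U.$$
This verifies condition (7) for every $N\lhd_O G$, so by Theorem \ref{relprojcharfinished} the module $U$ is relatively $H$-projective, completing the proof.

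No induction or limit argument is required beyond the reduction already packaged in Theorem \ref{relprojcharfinished}; the only real content is the observation, supplied by profinite Sylow theory, that $[G:HN]$ is a finite integer prime to $p$ whenever $H\supseteq P$ for a $p$-Sylow $P$ of $G$.
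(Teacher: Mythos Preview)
Your proof is correct and follows essentially the same route as the paper: both reduce to the open subgroup $HN$ (via \ref{relproj-H-iff-HN}, which is precisely the $(1)\Leftrightarrow(4)$ direction you cite) and then exploit that $[G:HN]$ is a unit in $k$ because $H$ contains a $p$-Sylow subgroup. The only cosmetic difference is that the paper verifies the lifting definition of relative $HN$-projectivity directly by averaging an $HN$-splitting $\rho'$ to $\rho=[G:HN]^{-1}\sum_{s\in G/HN}s\rho's^{-1}$, whereas you package the same averaging into criterion (7) by taking $\alpha_N=[G:HN]^{-1}\id_U$; these are the same computation in slightly different clothing. One small remark: your invocation of $(1)\Leftrightarrow(4)$ is actually redundant, since once you have verified (7) for every $N$ you already have $H$-projectivity by Theorem \ref{relprojcharfinished} without first passing through $HN$-projectivity.
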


\begin{proof}
Since $U$ is finitely generated, by \ref{relproj-H-iff-HN} we need
only show that $U$ is relatively $HN$-projective for any given
$N\lhd_O G$.  Suppose we have a diagram as in \ref{relprojdefn} and
a continuous $k\db{HN}$-module homomorphism $\rho':U\to V$ making
the diagram commute. Since the supernatural number $|G:H|$ is
coprime to $p$, the finite number $|G:HN|$ is non-zero in the field
$k$.  Hence the continuous map
$$\rho=1/|G:HN|\sum_{s\in G/HN}s\rho' s^{-1}$$
is well defined, and as in the finite case we check that $\rho$ is a
$k\db{G}$-module homomorphism such that $\beta\rho=\varphi$.
\end{proof}

\begin{corol}
If $U$ is a finitely generated indecomposable $k\db{G}$-module, then
any vertex of $U$ is a pro-$p$ group.
\end{corol}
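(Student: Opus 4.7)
The plan is to mimic the classical finite-group argument: take a pro-$p$ Sylow subgroup $P$ of the vertex $Q$ (which exists by the profinite Sylow theory in \cite[Chapter 2]{wilson}), show that $U$ is relatively $P$-projective, and then invoke the minimality clause in the definition of vertex to conclude $P=Q$. This forces $Q$ to be pro-$p$.

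To show that $U$ is relatively $P$-projective, I would use condition 4 of Theorem \ref{relprojcharfinished} and verify that $U$ is relatively $PN$-projective for each fixed $N\lhd_O G$. Since $P\subseteq Q\subseteq QN$ and $U$ is relatively $Q$-projective, $U$ is also relatively $QN$-projective, so $U\ds U\res{QN}\ind{G}$. Because $QN$ is open in $G$, the restriction $U\res{QN}$ is finitely generated over $k\db{QN}$. The natural isomorphism $QN/N\iso Q/(Q\cap N)$ identifies the image of $P$ in $QN/N$ with $PN/N$; as the image of a pro-$p$ Sylow under a surjection to a finite group, $PN/N$ is a $p$-Sylow of the finite group $QN/N$. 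Lifting back, one checks that $PN$ contains a pro-$p$ Sylow subgroup of $QN$. Proposition \ref{general Sylow-proj} applied inside $QN$ therefore gives $U\res{QN}\ds U\res{PN}\ind{QN}$. Inducing to $G$ and using the transitivity of induction $V\ind{QN}\ind{G}\iso V\ind{G}$ for a $k\db{PN}$-module $V$, one obtains
$$U\ds U\res{QN}\ind{G}\ds U\res{PN}\ind{QN}\ind{G}\iso U\res{PN}\ind{G},$$
so $U$ is relatively $PN$-projective. Since $N$ was arbitrary, \ref{relproj-H-iff-HN} yields that $U$ is relatively $P$-projective, as required.

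The main technical point is the Sylow-theoretic lift: verifying that $PN$ genuinely contains a pro-$p$ Sylow of $QN$ rather than merely mapping onto a $p$-Sylow of the finite quotient $QN/N$. This is a standard consequence of the conjugacy and existence theorems for pro-$p$ Sylow subgroups of profinite groups, since any pro-$p$ Sylow $S$ of $QN$ satisfies $SN/N=PN/N$ (both being $p$-Sylows of the finite group $QN/N$, up to conjugacy, and the existence argument lets us choose $S\subseteq PN$). The remainder of the proof is formal manipulation with the Mackey-free identities of induction and restriction and the equivalences already packaged in Theorem \ref{relprojcharfinished}.
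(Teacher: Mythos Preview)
Your argument is correct, but it takes a different route from the paper. The paper's proof is two lines: since $U$ is relatively $P$-projective for $P$ a $p$-Sylow subgroup of $G$ (by Proposition~\ref{general Sylow-proj}), there exists a vertex contained in $P$, hence a pro-$p$ vertex; then Theorem~\ref{verticesconjugate} (conjugacy of vertices) forces every vertex to be pro-$p$. By contrast, you fix an arbitrary vertex $Q$, take a $p$-Sylow $P$ of $Q$ itself, and argue directly that $U$ is relatively $P$-projective by working one open normal $N$ at a time and applying Proposition~\ref{general Sylow-proj} inside the open subgroup $QN$. Minimality of $Q$ then gives $P=Q$.

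What each approach buys: the paper's is shorter and cleanly reuses the conjugacy theorem already in hand. Yours is logically more self-contained in that it does not invoke Theorem~\ref{verticesconjugate} at all; it shows the pro-$p$ property of a given vertex from first principles. The Sylow lift you flag as the ``main technical point'' is indeed routine: a $p$-Sylow $S$ of $PN$ is pro-$p$, and $|QN:S|=|QN:PN|\cdot|PN:S|$ is coprime to $p$ because $|PN:S|$ is (by choice of $S$) and $|QN:PN|=|QN/N:PN/N|$ is (since $PN/N$ is a $p$-Sylow of the finite group $QN/N$). Thus $S$ is a $p$-Sylow of $QN$ lying in $PN$, and Proposition~\ref{general Sylow-proj} applies as you claim.
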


\begin{proof}
By \ref{general Sylow-proj}, $U$ has a pro-$p$ vertex, and now since
the set of pro-$p$ subgroups of $G$ is closed under conjugation the
result follows from \ref{verticesconjugate}.
\end{proof}

\section{Sources}\label{sources section}

For an indecomposable finitely generated module $U$ over a finite
group, there is attached to any vertex $Q$ of $U$ a finitely
generated indecomposable $kQ$-module $S$ with the property that
$U\ds S\ind{G}$.  This object is easily seen to be unique up to
conjugation by elements of $\norm{G}(Q)$.  If $G$ is a profinite or
even a pro-$p$ group, the corresponding notion of source seems less
natural, and even existence is not clear in general.  None-the-less,
we prove that if $G$ is virtually pro-$p$ and $U$ is an
indecomposable finitely generated $k\db{G}$-module with vertex $Q$
and finitely generated sources $S$ and $T$, then $S$ and $T$ are
conjugate in $\norm{G}(Q)$.

The following simple lemma will prove key:

\begin{lemma}\label{cofinal induced indecomposable}
Let $G$ be a virtually pro-$p$ group, let $H$ be a closed subgroup
of $G$ and let $V$ be a finitely generated indecomposable
$k\db{H}$-module. Then there exists a cofinal inverse system of
$N\lhd_O G$ for which each $V\ind{HN}$ is indecomposable.
\end{lemma}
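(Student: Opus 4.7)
The plan is to arrange matters so that $(V\ind{HN})_N$ is indecomposable for $N$ in a suitable cofinal system of $N\lhd_O G$, and then to deduce that $V\ind{HN}$ itself is indecomposable by applying \ref{coinv non vanishing} to the profinite group $HN$ and its closed pro-$p$ normal subgroup $N$.

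First I would note that, since $G$ is virtually pro-$p$, so is the closed subgroup $H$. Applying Proposition \ref{U_N indec} to the indecomposable module $V$ over $H$ yields some $M_0 \lhd_O H$ such that $V_M$ is indecomposable whenever $M \lhd_O H$ with $M \leq M_0$. Because the subgroups $H \cap N$, for $N \lhd_O G$, form a neighborhood base of $1$ in $H$, I can fix $N_1 \lhd_O G$ with $H \cap N_1 \leq M_0$. After also fixing an open normal pro-$p$ subgroup $P$ of $G$, I take $\mathcal{N}$ to be the cofinal system of those $N \lhd_O G$ contained in $P \cap N_1$.

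For each $N \in \mathcal{N}$ the group $H \cap N$ is open normal in $H$, pro-$p$ as a closed subgroup of $P$, and contained in $M_0$, so the previous paragraph gives that $V_{H\cap N}$ is indecomposable. Applying \ref{coinvariant technicals}(4) with $HN$ in place of $G$ yields
$$(V\ind{HN})_N \iso V_{H\cap N}\ind{HN/N},$$
and since the inclusion $H \hookrightarrow HN$ induces an isomorphism $H/(H\cap N) \iso HN/N$, the $k\db{HN/N}$-module structure on $V_{H\cap N}$ coming from \ref{coinvariant technicals}(3) is already over the full quotient $HN/N$. The induction on the right is therefore trivial, giving $(V\ind{HN})_N \iso V_{H\cap N}$, which is indecomposable.

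Finally, suppose $V\ind{HN} = A \oplus B$ with $A,B$ both non-zero. By \ref{coinvariant technicals}(2) one has $(V\ind{HN})_N = A_N \oplus B_N$, and by \ref{coinv non vanishing} applied to $HN$ and the pro-$p$ subgroup $N$ we get $A_N, B_N \neq 0$, contradicting indecomposability of $(V\ind{HN})_N$. Hence $V\ind{HN}$ is indecomposable for every $N \in \mathcal{N}$. The main obstacle is the identification $(V\ind{HN})_N \iso V_{H\cap N}$, which requires applying \ref{coinvariant technicals}(4) in the less familiar setting where the ambient group is $HN$ rather than $G$ and observing that the resulting induction is between isomorphic groups.
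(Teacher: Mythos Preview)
Your argument is correct and follows essentially the same route as the paper's proof: identify $(V\ind{HN})_N$ with $V_{H\cap N}$ via \ref{coinvariant technicals}, use \ref{U_N indec} to make these coinvariants indecomposable on a cofinal system, and then lift indecomposability to $V\ind{HN}$ using \ref{coinv non vanishing} with $N$ pro-$p$. If anything, you are more careful than the paper in one respect: the paper invokes \ref{U_N indec} directly on the system $\{V_{H\cap N}\}_{N\lhd_O G}$, whereas you explicitly apply \ref{U_N indec} to $V$ as a $k\db{H}$-module to obtain $M_0\lhd_O H$ and then pull this back to a suitable $N_1\lhd_O G$ via $H\cap N_1\leq M_0$.
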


\begin{proof}
For any $N\lhd_O G$ we have by \ref{coinvariant technicals} that
$$(V\ind{HN})_N\iso V_{H\cap N}\ind{HN/N}\iso V_{H\cap N}.$$
Since $V\iso \invlim_{N\lhd_O G}V_{H\cap N}$ it follows by \ref{U_N
indec} that there is a cofinal inverse system of $N\lhd_O G$ for
which $V_{H\cap N}$ and thus  $(V\ind{HN})_N$ is indecomposable. Now
since we can choose our system of $N$ to be pro-$p$ we have by
\ref{coinv non vanishing} that no non-zero summands of
$V\ind{HN}$ can become zero on taking coinvariants, and so
$V\ind{HN}$ is indecomposable.
\end{proof}

Recall that if $V$ is a profinite $k\db{H}$-module for $H\leq_C G$
and $x\in G$ then we denote by $x(V)$ the $k\db{xHx^{-1}}$-module
$x\ctens_{k\db{H}}V$ with action from $xHx^{-1}$ given by
$$xhx^{-1}(x\ctens v)=x\ctens hv.$$
The functor $x(-)$ is exact.  We include two technical facts about
how conjugation interacts with induction and coinvariants:

\begin{lemma}\label{comm commutes with ind 2}
Let $Q\leq_C H\leq_C G$, let $T$ be a $k\db{Q}$-module, and let
$x\in G$.  Then
$$x(T)\ind{xHx^{-1}}\iso x(T\ind{H}).$$
\end{lemma}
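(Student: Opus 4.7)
The plan is to construct mutually inverse continuous $k\db{xHx^{-1}}$-module homomorphisms
$$\Phi : x(T)\ind{xHx^{-1}} \to x(T\ind{H}), \qquad \Psi : x(T\ind{H}) \to x(T)\ind{xHx^{-1}},$$
specified on elementary tensors by
$$\Phi\bigl((xhx^{-1}) \ctens (x \ctens t)\bigr) = x \ctens (h \ctens t), \qquad \Psi\bigl(x \ctens (h \ctens t)\bigr) = (xhx^{-1}) \ctens (x \ctens t).$$

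To construct $\Phi$ rigorously, I would appeal to Frobenius reciprocity (Lemma \ref{H hom extends by induction}) applied to the pair $xQx^{-1} \leq_C xHx^{-1}$: it suffices to exhibit a continuous $k\db{xQx^{-1}}$-module homomorphism $\phi_0 : x(T) \to x(T\ind{H})\res{xQx^{-1}}$, and the assignment $x \ctens t \mapsto x \ctens (1 \ctens t)$ does the job. The $k\db{xQx^{-1}}$-linearity reduces, for $q \in Q$, to the identity
$$\phi_0(xqx^{-1} \cdot (x \ctens t)) = \phi_0(x \ctens qt) = x \ctens (1 \ctens qt) = x \ctens (q \ctens t) = xqx^{-1} \cdot \phi_0(x \ctens t),$$
where the last step invokes the definition of the twisted action on $x(T\ind{H})$.

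To construct $\Psi$, I would invoke the universal property of the completed tensor product $x \ctens_{k\db{H}} (T\ind{H}) = x(T\ind{H})$. It suffices to provide a continuous $k$-linear map $\psi_0 : T\ind{H} \to x(T)\ind{xHx^{-1}}$ intertwining the $k\db{H}$-action on the domain with the twisted $k\db{xHx^{-1}}$-action on the target, i.e.\ satisfying $\psi_0(hv) = (xhx^{-1}) \cdot \psi_0(v)$. Setting $\psi_0(h \ctens t) = (xhx^{-1}) \ctens (x \ctens t)$, the intertwining property reduces to the easy identity $xh_1h x^{-1} = (xh_1x^{-1})(xhx^{-1})$ together with the $k\db{xHx^{-1}}$-module structure on the left factor of $x(T)\ind{xHx^{-1}}$.

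Finally, a direct calculation on elementary tensors shows that $\Phi \circ \Psi$ and $\Psi \circ \Phi$ both act as the identity; combined with continuity and the density of elementary tensors, this establishes the desired isomorphism. The main obstacle is purely notational: keeping careful track of the four distinct tensor products involved and of the twisted actions on the conjugated modules. No new ideas are required beyond the universal properties of induction and of completed tensor products already used earlier in the paper.
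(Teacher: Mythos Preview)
Your proposal is correct and is precisely the kind of routine verification the paper has in mind when it says ``This is easily checked'': the paper gives no further detail, and your explicit mutually inverse maps $\Phi$ and $\Psi$, justified via Lemma~\ref{H hom extends by induction} and the universal property of the completed tensor product, are the natural way to fill in that verification. There is nothing to add.
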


\begin{proof}
This is easily checked.
\end{proof}

\begin{lemma}\label{comm commutes with coinv}
Let $H\leq_C G$, $N\lhd_O G$, and let $T$ be a $k\db{H}$-module.
Then
$$(x(T))_{xHx^{-1}\cap N}\iso x(T_{H\cap N}).$$
\end{lemma}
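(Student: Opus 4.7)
The plan is to verify the isomorphism using the universal property of coinvariants (Lemma \ref{coinvUP}) and the exactness of the conjugation functor $x(-)$. The key preliminary observation is the group-theoretic identity $xHx^{-1}\cap N = x(H\cap N)x^{-1}$: the inclusion $\supseteq$ is immediate, while for $\subseteq$, given $xhx^{-1} \in N$ we use normality of $N$ to conclude $h = x^{-1}(xhx^{-1})x \in N$, hence $h \in H \cap N$. Note in particular that $H\cap N$ is normal in $H$, and $xHx^{-1}\cap N$ is normal in $xHx^{-1}$, so both coinvariants make sense.

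Next I would apply the exact functor $x(-)$ to the canonical projection $\varphi_{H\cap N}: T \onto T_{H\cap N}$ to obtain a continuous $k\db{xHx^{-1}}$-module surjection
$$\alpha: x(T) \to x(T_{H\cap N}), \qquad x\ctens t \mapsto x\ctens(1\ctens t).$$
Using the identification $xHx^{-1}\cap N = x(H\cap N)x^{-1}$, for any $h \in H\cap N$ we compute $(xhx^{-1})(x\ctens(1\ctens t)) = x\ctens h(1\ctens t) = x\ctens(1\ctens t)$, since $H\cap N$ acts trivially on $T_{H\cap N}$. Thus $xHx^{-1}\cap N$ acts trivially on $x(T_{H\cap N})$, and Lemma \ref{coinvUP} yields a unique continuous $k\db{xHx^{-1}}$-module homomorphism
$$\bar{\alpha}: (x(T))_{xHx^{-1}\cap N} \to x(T_{H\cap N})$$
with $\bar{\alpha}\circ\varphi_{xHx^{-1}\cap N} = \alpha$. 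It is surjective because $\alpha$ is.

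For the inverse, the cleanest route is to regard $x(-)$ as restriction along the topological ring isomorphism $k\db{H}\to k\db{xHx^{-1}}$ sending $h\mapsto xhx^{-1}$, which carries $k\db{H\cap N}$ isomorphically onto $k\db{xHx^{-1}\cap N}$; coinvariants are then visibly compatible. Concretely, the map $T \to (x(T))_{xHx^{-1}\cap N}$ given by $t \mapsto \overline{x\ctens t}$, interpreted via this ring isomorphism, is a continuous $k\db{H}$-module homomorphism annihilating $H\cap N$, so by Lemma \ref{coinvUP} it factors through $T_{H\cap N}$, yielding a $k$-linear map $T_{H\cap N} \to (x(T))_{xHx^{-1}\cap N}$ which extends along $x(-)$ to the desired inverse. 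I expect the only real obstacle to be keeping the completed tensor product bookkeeping straight through the ring-isomorphism identification; once one is willing to pass through that identification, both sides satisfy the same universal property and the two maps are visibly mutually inverse on elementary tensors.
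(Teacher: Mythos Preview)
Your proof is correct and rests on the same two ingredients as the paper's: the exactness of the conjugation functor $x(-)$ and the identity $xHx^{-1}\cap N = x(H\cap N)x^{-1}$. The paper's version is simply more compressed---it conjugates the short exact sequence $K\to T\to T_{H\cap N}$ by $x$ and (implicitly) identifies $x(K)$ with the kernel defining $(x(T))_{xHx^{-1}\cap N}$, whereas you unpack the same content via the universal property of coinvariants to build mutual inverses explicitly.
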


\begin{proof}
If $K$ is the kernel of the canonical map $T\onto T_{H\cap N}$ then
the result follows by conjugating the exact sequence $K\to T\to
T_{H\cap N}$ by $x$.
\end{proof}

\begin{defn}
Let $G$ be a profinite group and let $U$ be a finitely generated
indecomposable profinite $k\db{G}$-module with vertex $Q$. A
\emph{source} of $U$ is an indecomposable $k\db{Q}$-module $S$ such
that $U\ds S\ind{G}$.
\end{defn}

If $G$ is a virtually pro-$p$ group then our primary unanswered
question is whether a finitely generated indecomposable
$k\db{G}$-module with vertex $Q$ need be a summand of $V\ind{G}_Q$
for some finitely generated module $V$.  If not then even the
existence of a source for $U$ is uncertain.  If a finitely generated
source exists then we have the following analogue to the well-known
result for finite groups:

\begin{theorem}\label{sources conjugate}
Let $G$ be a virtually pro-$p$ group and let $U$ be a finitely
generated indecomposable $k\db{G}$-module with vertex $Q$ and finitely generated
source.  If $S,T$ are finitely generated $k\db{Q}$-modules that act
as sources of $U$, then $S\iso x(T)$ for some $x\in\norm{G}(Q)$.
\end{theorem}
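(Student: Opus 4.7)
The plan is to pass via coinvariants to a collection of finite-group problems, apply the classical Green uniqueness of sources in each, and then assemble the resulting conjugators into a single element of $G$ by compactness.

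First I would observe that both $S$ and $T$, viewed as $k\db{Q}$-modules, have vertex equal to $Q$. Indeed, were $S$ relatively $Q_0$-projective for some closed $Q_0\lneq Q$, then $S\ds S\res{Q_0}\ind{Q}$ and inducing to $G$ would give $U\ds S\ind{G}\ds S\res{Q_0}\ind{G}$, contradicting that $Q$ is a vertex of $U$.

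Next I would fix a cofinal system $\mathcal{N}$ of open normal pro-$p$ subgroups of $G$ (available because $G$ is virtually pro-$p$) such that, for every $N\in\mathcal{N}$, each of $U_N$, $S_{Q\cap N}$ and $T_{Q\cap N}$ is indecomposable; this is possible by applying Proposition \ref{U_N indec} to $U$ over $G$ and to $S$, $T$ over the pro-$p$ group $Q$. Writing $\bar G=G/N$, $\bar Q=QN/N$, $\bar U=U_N$, $\bar S=S_{Q\cap N}$, $\bar T=T_{Q\cap N}$, part 4 of Lemma \ref{coinvariant technicals} translates $U\ds S\ind{G}$ and $U\ds T\ind{G}$ into $\bar U\ds\bar S\ind{\bar G}$ and $\bar U\ds\bar T\ind{\bar G}$. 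After further refining $\mathcal{N}$ if needed, I would argue that $\bar Q$ is a vertex of $\bar U$ and that $\bar S$, $\bar T$ have vertex $\bar Q$ as $k\bar Q$-modules. Under these hypotheses, a Mackey-and-Krull-Schmidt argument at the finite level (decompose $\bar S\ind{\bar G}\res{\bar Q}$ by Mackey and note that indecomposable summands of vertex $\bar Q$ occur only at double cosets $\bar g\in\norm{\bar G}(\bar Q)$, where the piece is $\bar g(\bar S)$) shows that $\bar S$ and $\bar T$ are each $\norm{\bar G}(\bar Q)$-conjugate to a classical source of $\bar U$, hence to each other. Thus for each $N$ we obtain $\bar x_N\in\norm{\bar G}(\bar Q)$ with $\bar S\iso\bar x_N(\bar T)$.

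Finally I would extract a single $x\in\norm{G}(Q)$ by compactness. Let $C_N\subseteq G$ be the preimage under $G\onto\bar G$ of $\{\bar x\in\norm{\bar G}(\bar Q):\bar x(\bar T)\iso\bar S\}$. Each $C_N$ is a union of finitely many cosets of $N$, hence closed; it is non-empty by the previous paragraph; and $C_{N'}\subseteq C_N$ whenever $N'\leq N$ in $\mathcal{N}$, by descending conjugators along the inverse system using Lemma \ref{comm commutes with coinv}. Compactness of $G$ now produces $x\in\bigcap_N C_N$. Exactly as in the proof of Theorem \ref{verticesconjugate}, $xQx^{-1}N=QN$ for every $N$ yields $xQx^{-1}=Q$ via \cite[0.3.3]{wilson}; and the isomorphisms $x(T)_{Q\cap N}\iso x(T_{Q\cap N})\iso\bar S=S_{Q\cap N}$ furnished by Lemma \ref{comm commutes with coinv} hold for every $N\in\mathcal{N}$, so Lemma \ref{fg summands of coinvs are summands} delivers $S\iso x(T)$.

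I expect the main obstacle to lie in the middle step: arranging, along a cofinal refinement of $\mathcal{N}$, that the vertex of $\bar U$ really is $\bar Q$ and the vertices of $\bar S$, $\bar T$ as $k\bar Q$-modules really are $\bar Q$. This does not follow mechanically from Proposition \ref{Urelproj iff UNrelproj}, whose biconditional ranges over every open normal subgroup rather than a single coinvariant, and seems to require a secondary compactness argument in the spirit of the double-coset manipulation used in the proof of Theorem \ref{verticesconjugate}.
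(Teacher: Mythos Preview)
Your global architecture---pass to coinvariants, obtain a conjugator for each $N$, assemble via compactness, and finish with Lemma~\ref{fg summands of coinvs are summands}---is exactly the paper's, and your endgame is essentially identical to what the paper does.

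The divergence is precisely where you locate it. To produce, for each $N$, an element of $\norm{G}(QN)$ carrying $T_{Q\cap N}$ to $S_{Q\cap N}$, you descend all the way to $G/N$ and try to invoke classical source uniqueness there. That forces you to know that $QN/N$ is a vertex of $U_N$. Proposition~\ref{Urelproj iff UNrelproj} gives only relative $QN/N$-projectivity of $U_N$; minimality at a single finite level is not guaranteed, and I do not see a clean way to arrange it cofinally. So the gap you identify is real.

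The paper avoids this question altogether by working one level up. Instead of descending to $G/N$, it stays in $G$ and uses the \emph{open} subgroup $QN$: Mackey gives a finite decomposition of $S\ind{G}\res{QN}$, and the key additional input---Lemma~\ref{cofinal induced indecomposable}---ensures that $S\ind{QN}$ and $T\ind{QN}$ are themselves indecomposable along a cofinal system. A vertex comparison on summands of $U\res{QN}$ (using only that $Q$ is a vertex of $U$, never anything about vertices of $U_N$) singles out a term $z(S)\ind{QN}$ with $z\in\norm{G}(QN)$ as a summand of $U\res{QN}$; feeding this into $T\ind{G}\res{QN}$ and repeating yields $z(S)\ind{QN}\iso y(T)\ind{QN}$ for some $y\in\norm{G}(QN)$. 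Only then does one apply $(-)_N$ and Lemma~\ref{comm commutes with coinv} to obtain $S_{Q\cap N}\iso z^{-1}y(T_{Q\cap N})$, after which the compactness argument is exactly yours. The upshot: replace your appeal to finite source uniqueness by a direct Mackey argument at the $QN$-level together with Lemma~\ref{cofinal induced indecomposable}, and the obstacle disappears.
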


\begin{proof}
We work within a cofinal system of $N\lhd_O G$ for which $S_{Q\cap
N}, T_{Q\cap N}, S\ind{QN}$ and $T\ind{QN}$ are indecomposable -
this is allowed by \ref{U_N indec} and \ref{cofinal induced
indecomposable}.  For any $N$ in this system we have
$$U\ds S\ind{G}\implies U\res{QN}\ds S\ind{G}\res{QN}\iso \bigoplus_{z\in QN\backslash G/Q} z(S)\res{zQz^{-1}\cap QN}\ind{QN}.$$
Since $U\ds U\res{QN}\ind{G}$ we must have that some indecomposable
summand $X$ of $U\res{QN}$ has vertex conjugate to $Q$.  If
$zQz^{-1}\cap QN$ is properly contained in $zQz^{-1}$ the summands
of $z(S)\res{zQz^{-1}\cap QN}$ have vertex strictly smaller than a
conjugate of $Q$, and so it follows that for some $z\in G$ with
$zQz^{-1}\subseteq QN$ we have $X\ds z(S)\res{zQz^{-1}\cap
QN}\ind{QN}= z(S)\ind{QN}$.  Note also that
$$zQz^{-1}\subseteq QN\implies zQz^{-1}N\subseteq QN\implies zQNz^{-1}=QN$$
so that $z\in \norm{G}(QN)$.

Since $z(S)\ind{QN}\iso z(S\ind{QN})$ by \ref{comm commutes with ind
2} and $S\ind{QN}$ is indecomposable, it follows that $z(S)\ind{QN}$
is indecomposable and so for this $z$ we have
$$z(S)\ind{QN}\ds U\res{QN}.$$

On the other hand $U\ds T\ind{G}$, so
$$z(S)\ind{QN}\ds T\ind{G}\res{QN}\iso \bigoplus_{y\in QN\backslash G/Q} y(T)\res{yQy^{-1}\cap QN}\ind{QN}$$
and by the same argument we find that $z(S)\ind{QN}\iso
y(T)\ind{QN}$ for some element $y\in \norm{G}(QN)$.

Note that $zQz^{-1}/(zQz^{-1}\cap N)\iso zQNz^{-1}/N\iso QN/N$.  We
will use this observation and \ref{comm commutes with coinv} to
transfer these results from the setting of induced modules to the
setting of coinvariant modules where we have the necessary tools to
draw the conclusions we require:

For each $N\lhd_O G$ in our inverse system we have
\begin{align*}
z(S)\ind{QN}                           \iso & y(T)\ind{QN}             \\
\implies (z(S)\ind{QN})_N              \iso & (y(T)\ind{QN})_N \\
\implies z(S)_{zQz^{-1}\cap N} \iso & y(T)_{yQy^{-1}\cap N}       \\
\implies z^{-1}(z(S)_{zQz^{-1}\cap N}) \iso & z^{-1}(y(T)_{yQy^{-1}\cap N})    \\
\implies S_{Q\cap N}             \iso & z^{-1}y(T)_{(z^{-1}y)Q(z^{-1}y)^{-1}\cap N}    \\
\implies S_{Q\cap N}             \iso & z^{-1}y(T_{Q\cap N}).
\end{align*}

Denote by $C_N$ the set of $w\in \norm{G}(QN)$ such that $S_{Q\cap
N}\iso w(T_{Q\cap N})$.  Since $z^{-1}y$ (which depends on $N$)
satisfies these conditions it follows that $C_N$ is non-empty. Each
$C_N$ is also clearly closed in $G$. The theorem follows easily once
we show the intersection $\bigcap_N C_N$ is non-empty.

Certainly $C_{N_1\cap\hdots \cap N_n}\neq\emptyset$ for any finite
set $N_1,\hdots, N_n$.  Let $N_1\cap\hdots \cap N_n=M$ and fix $w\in
C_M$.  Now $M\leq N_i$ for each $i$ and so
\begin{align*}
S_{Q\cap M} &\iso w(T_{Q\cap M})         \\
\implies (S_{Q\cap M})_{QM\cap N_i} &\iso w(T_{Q\cap M})_{QM\cap N_i}  \\
\implies S_{Q\cap N_i}&\iso w(T_{Q\cap N_i})
\end{align*}
by \ref{coinvariant technicals} so that $w\in C_{N_i}$ for each $i$,
and so $w\in C_{N_1}\cap\hdots\cap C_{N_n}$.  Thus, by compactness
we have $\bigcap_N C_N\neq \emptyset$.

Fix $x\in \bigcap_N C_N$, so that for each $N$ in our system we have
$$S_{Q\cap N}\iso x(T_{Q\cap N}),$$
and since $x\in \bigcap_N\norm{G}(QN)=\norm{G}(Q)$, we can rewrite
this isomorphism as
$$S_{Q\cap N}\iso x(T)_{Q\cap N}$$
so that by \ref{fg summands of coinvs are summands} we have $S\iso
x(T)$, as required.
\end{proof}

\section{Green's Indecomposability Theorem}\label{green indecomp section}

Green's indecomposability theorem says that if $V$ is a finitely
generated absolutely indecomposable module for the group algebra
$kH$, where $H$ is a subnormal subgroup of the finite group $F$ of
index a power of $p$, then the module $V\ind{F}$ is also absolutely
indecomposable. We extend this result to modules over the completed
group algebra of a virtually pro-$p$ group $G$.

Throughout this section let $G$ be a virtually pro-$p$ group and let
$U$ be an indecomposable finitely generated profinite
$k\db{G}$-module. By \ref{coinv non vanishing} and \ref{U_N indec}
we can choose a cofinal inverse system of $N\lhd_O G$ with $U_N$
non-zero and indecomposable, and we will work within this system
throughout.  All rings we consider have a 1.  We do not allow 1 to
equal 0.

For each $N$ in our system let $E_N=\End_{k\db{G}}(U_N)$,
$R_N=\rad(\End_{k\db{G}}(U_N))$ and $\tilde{E}_N=E_N/R_N$.  Each
$E_N$ is a local ring and thus $\tilde{E}_N$ is a finite division
ring \cite[5.21]{curtisandreiner1} so is a finite field. It is clear
that this field must contain $k$.  Our aim for the next few lemmas
is to show that $\End_{k\db{G}}(U)/\rad(\End_{k\db{G}}(U))\iso
\invlim \tilde{E}_N$.

Define maps $\rho_{MN}:E_N\to E_M$ whenever $N\leq M$ as follows: If
$\alpha_N\in E_N$ then define $\rho_{MN}(\alpha_N)=\alpha_M\in E_M$
by $\alpha_M(1\ctens_{M}u)=1\ctens_M\alpha_N(u)$.  Each $\rho_{MN}$
is a ring homomorphism.

\begin{lemma}\label{finite radical to radical}
The map $\rho_{MN}$ sends the radical $R_N$ of $E_N$ into $R_M$, and
thus induces a map $\tilde{\rho}_{MN}:\tilde{E}_N\to \tilde{E}_M$,
which is a ring homomorphism.
\end{lemma}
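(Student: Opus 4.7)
The statement has two assertions: that $\rho_{MN}$ carries $R_N$ into $R_M$, and that the induced map $\tilde{\rho}_{MN}$ is a ring homomorphism. The second is automatic once the first is established (it is induced by a ring homomorphism via the universal property of the quotient), so I would focus entirely on proving $\rho_{MN}(R_N)\subseteq R_M$.

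The key observation is that $R_N$ is a nilpotent ideal. Indeed, $U_N$ is indecomposable and finitely generated over $k\db{G}$ by our choice of cofinal inverse system, so Proposition \ref{fg module has LER} gives that $E_N$ is local. Since $N\lhd_O G$ and $U$ is finitely generated, $U_N$ is finite, so $E_N$ sits inside the finite-dimensional $k$-algebra $\End_k(U_N)$ and is therefore a finite-dimensional, hence Artinian, local $k$-algebra. Thus $R_N^n=0$ for some $n\geq 1$.

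Given $\alpha_N\in R_N$, we have $\alpha_N^n=0$, so $\rho_{MN}(\alpha_N)^n=\rho_{MN}(\alpha_N^n)=0$. A nilpotent element is never a unit in a ring with $1\neq 0$, so $\rho_{MN}(\alpha_N)$ is a non-unit of $E_M$. Since $E_M$ is also local (by the same reasoning applied to $U_M$), its set of non-units coincides with $R_M$, and therefore $\rho_{MN}(\alpha_N)\in R_M$. The induced map $\tilde{\rho}_{MN}:\tilde{E}_N\to\tilde{E}_M$ is then well-defined and is a ring homomorphism because $\rho_{MN}$ is. There is no real obstacle here: the argument rests on the classical fact that the Jacobson radical of a finite-dimensional local algebra is nilpotent, combined with the fact that $\rho_{MN}$ is a ring map, so nilpotents are preserved.
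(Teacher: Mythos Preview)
Your proposal is correct and follows essentially the same approach as the paper's own proof, which simply notes that elements of $R_N$ are precisely the nilpotent endomorphisms of $U_N$. You have just spelled out the details: $E_N$ is a finite (hence Artinian) local $k$-algebra, so $R_N$ consists of nilpotent elements, nilpotents map to nilpotents under the ring map $\rho_{MN}$, and nilpotents are non-units, which in the local ring $E_M$ lie in $R_M$.
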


\begin{proof}
This is easily checked by noting that elements of the radical $R_N$
are precisely the nilpotent endomorphisms of $U_N$.
\end{proof}

Observe that $\{E_N,\rho_{MN}\}$ is an inverse system of finite
rings and $\{\tilde{E}_N,\tilde{\rho}_{MN}\}$ is an inverse system
of finite fields.  Since field homomorphisms are injective we can
choose a cofinal inverse system of $N$ for which every
$\tilde{E}_N=k'$, for some fixed finite extension field $k'$ of $k$.
From now on we will work inside this cofinal inverse system.

Define $E=\End_{k\db{G}}(U), R=\rad(E), \tilde{E}=\tilde{E}(U)=E/R$.
Note that using the universal property of $(-)_N$ a simple tweaking
of \ref{Hom(U,W)isinverselimit} shows that $E\iso\invlim_N E_N$.
Denote by $\rho_N$ the map $E\to E_N$ from the above limit.  This is
the map given by applying the functor $(-)_N$ to the morphisms in
$E$.

\begin{lemma}
The radical of $E$ maps into the radical of $E_N$ under $\rho_N$,
for each $N$.
\end{lemma}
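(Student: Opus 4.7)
The plan is to exploit that both $E$ and each $E_N$ are local rings (by Proposition \ref{fg module has LER} applied to the indecomposable modules $U$ and $U_N$), so that $R$ and $R_N$ consist precisely of the non-units, combined with the ring isomorphism $E\iso\invlim_N E_N$ noted just before the lemma statement.

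First I would establish the auxiliary fact that an element $\alpha\in E$ is a unit if and only if $\rho_M(\alpha)$ is a unit in $E_M$ for every $M$ in our cofinal system. One direction is immediate since ring homomorphisms send units to units; for the other, given inverses $\beta_M$ of $\rho_M(\alpha)$ at each level, the family $\{\beta_M\}$ is automatically compatible under the transition maps $\rho_{M'M}$ (since inverses behave well under ring homomorphisms), so assembles into a two-sided inverse of $\alpha$ in $\invlim E_N\iso E$.

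Then I would argue by contrapositive: suppose $\alpha_N:=\rho_N(\alpha)$ is a unit in $E_N$, and I aim to deduce that $\alpha$ is a unit in $E$. By the auxiliary fact it suffices to show every $\alpha_M$ is a unit in $E_M$. For $M$ with $N\leq M$ this is immediate, since $\alpha_M=\rho_{MN}(\alpha_N)$ is the image of a unit. For $M$ with $M\leq N$, locality of $E_M$ gives that $\alpha_M$ is either a unit or lies in $R_M$; in the latter case Lemma \ref{finite radical to radical} forces $\alpha_N=\rho_{NM}(\alpha_M)\in R_N$, contradicting the hypothesis, so $\alpha_M$ must be a unit. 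For an arbitrary $M$ in the system, downward directedness of the cofinal family (inherited from closure of open normal subgroups under intersection) yields some $M'$ with $M'\leq M$ and $M'\leq N$; the previous case gives $\alpha_{M'}$ a unit, so $\alpha_M=\rho_{MM'}(\alpha_{M'})$ is also a unit.

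The main subtlety is the ``downward'' step with $M\leq N$: here we cannot directly pull back an inverse from $E_N$ to $E_M$, and must instead use Lemma \ref{finite radical to radical} in its natural direction to observe that a putative non-unit $\alpha_M$ would propagate to force $\alpha_N\in R_N$, contradicting our hypothesis. Everything else is routine inverse-limit bookkeeping.
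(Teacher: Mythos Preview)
Your argument is correct, and it proceeds by a genuinely different route from the paper. The paper argues directly: given $\alpha\in R$, it first shows $\alpha$ cannot be surjective (else right exactness of $(-)_N$ would make every $\alpha_N$ surjective, hence bijective on the finite $U_N$, hence $\alpha$ invertible), then locates a single $N_0$ with $\alpha_{N_0}$ not onto, and finally propagates non-surjectivity downward and across via intersections and Lemma~\ref{finite radical to radical}. Your proof, by contrast, is purely ring-theoretic: you never touch the module $U$ or right exactness, working instead entirely inside the inverse system of local rings $\{E_N\}$ and assembling inverses in the limit. Your approach is arguably cleaner and more portable---it would apply verbatim to any inverse system of local rings whose transition maps carry radicals to radicals---while the paper's approach has the minor advantage of making the module-theoretic content (non-surjectivity) explicit. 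Both rely on Lemma~\ref{finite radical to radical} in the same direction at the key step, and both need the cofinal system to be directed, which it is.
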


\begin{proof}
Recall that the radical of $E$ consists of all non-invertible
endomorphisms of $U$.  Fix an element $\alpha$ in the radical of
$E$, so that $\alpha$ is not an isomorphism.  If $\alpha$ were surjective, then each
$\alpha_N$ would also be onto because $(-)_N$ is right exact.  But then each $\alpha_N$ would be
an isomorphism, and hence so would be $\alpha$, contrary to assumption.  It follows that $\alpha$ is not surjective.
If each $\rho_N(\alpha)=\alpha_N$ were onto then so would be $\alpha$, so we
can find some $N_0\lhd_O G$ with $\alpha_{N_0}$ not onto.

We note that for any $N'\lhd_O G$ contained in $N_0$, the
corresponding $\alpha_{N'}$ is not onto. Fix some arbitrary $N\lhd_O
G$, and consider $L=N\cap N_0$.  Then $\alpha_L$ is not onto since
$L\leq N_0$, so that $\alpha_L\in R_L$.  But now by Lemma
\ref{finite radical to radical} this implies that $\alpha_N\in R_N$
as well, so that the image of $R$ in $E_N$ is contained inside
$R_N$.
\end{proof}

The endomorphism ring of $U$ is local by \ref{fg module has LER},
and thus $\tilde{E}$ is a division ring.

\begin{lemma}\label{End by rad inverse limit}
The division ring $\tilde{E}$ is a finite field and is isomorphic to
$\invlim\tilde{E}_N$.
\end{lemma}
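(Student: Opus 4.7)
The plan is to identify the radical $R$ of $E$ with $\invlim R_N$ under the isomorphism $E \iso \invlim E_N$, after which the result follows by taking the inverse limit of the short exact sequences $0 \to R_N \to E_N \to \tilde{E}_N \to 0$.

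First I would check that the compatible family $\tilde{\rho}_N : \tilde{E} \to \tilde{E}_N$ assembles to a ring homomorphism $\tilde{E} \to \invlim \tilde{E}_N$, and then argue that $R$, viewed as a submodule of $\invlim E_N$, coincides with $\invlim R_N$. The inclusion $R \subseteq \invlim R_N$ is exactly the content of the previous lemma, applied to every $N$ in our cofinal system. For the reverse, suppose $\alpha \in E$ has $\rho_N(\alpha) = \alpha_N \in R_N$ for every $N$, so that each $\alpha_N$ fails to be invertible in the local ring $E_N$. If $\alpha$ were invertible in $E$, then functoriality of $(-)_N$ would force $(\alpha^{-1})_N$ to be a two-sided inverse of $\alpha_N$, a contradiction. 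Hence $\alpha$ is a non-unit in $E$, and since $E$ is local by \ref{fg module has LER}, this places $\alpha \in R$.

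Having shown $R = \invlim R_N$, I would invoke the fact that the short exact sequences $0 \to R_N \to E_N \to \tilde{E}_N \to 0$ consist of finite abelian groups, so the inverse limit functor is exact on this system (there is no derived-limit issue for inverse systems of finite objects). This yields
$$\invlim \tilde{E}_N \iso (\invlim E_N) / (\invlim R_N) \iso E/R = \tilde{E}.$$

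To finish, I would use that by the cofinal choice made just before the lemma each $\tilde{E}_N$ equals the fixed finite field $k'$, and the transition maps $\tilde{\rho}_{MN}$ are ring homomorphisms of $k'$; being ring maps between finite fields, they are injective and therefore bijective. Consequently any projection identifies $\invlim \tilde{E}_N$ with $k'$, which is in particular a finite field, so $\tilde{E}$ is a finite field as claimed.

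The main obstacle is the identification $R = \invlim R_N$: the forward inclusion is formal, but the reverse genuinely uses both locality of $E$ (to replace ``in the radical'' by ``non-invertible'') and right-exactness/functoriality of coinvariants (to transfer invertibility between $E$ and the finite quotients $E_N$). Once this step is in place, the rest of the argument is a routine application of the exactness of inverse limits on finite systems.
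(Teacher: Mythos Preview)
Your argument is correct, but the route differs from the paper's. You establish the equality $R=\invlim R_N$ directly: the inclusion $R\subseteq\invlim R_N$ is the previous lemma, and for the reverse you argue that if every $\alpha_N$ is a non-unit then $\alpha$ cannot be a unit (else $(\alpha^{-1})_N$ would invert $\alpha_N$), whence $\alpha\in R$ by locality of $E$. Then exactness of $\invlim$ on inverse systems of finite abelian groups yields $\tilde{E}\iso\invlim\tilde{E}_N$, and the cofinal choice $\tilde{E}_N=k'$ with bijective transition maps identifies the limit with the finite field $k'$.

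The paper instead only uses the forward inclusion $R\subseteq\ker\gamma$, where $\gamma:E\to\invlim\tilde{E}_N$ is the limit of the canonical surjections $\gamma_N:E_N\onto\tilde{E}_N$. Surjectivity of $\gamma$ (as a limit of surjections between finite sets) gives a surjection $\tilde{E}\onto\invlim\tilde{E}_N$; since both sides are division rings this surjection is automatically injective, yielding the isomorphism without ever proving $\invlim R_N\subseteq R$ explicitly. The paper's trick trades your explicit kernel computation and the Mittag--Leffler exactness step for the observation that division rings have no nontrivial two-sided ideals. Both approaches lean on locality of $E$ in the same essential way (yours to characterise $R$ as the non-units, the paper's to make $\tilde{E}$ a division ring), so the difference is one of packaging rather than of underlying content.
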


\begin{proof}
For each $N$ we have canonical surjections
$\gamma_N:E_N\twoheadrightarrow E_N/R_N=\tilde{E}_N$, which give a
map of inverse systems since for $N\leq M$ the diagrams
$$
\xymatrix{
E_N \ar[d]^{\rho_{MN}} \ar@{->>}[r]^{\gamma_N} &\tilde{E}_N\ar[d]^{\tilde{\rho}_{MN}}\\
E_M \ar@{->>}[r]^{\gamma_M} &\tilde{E}_M}
$$
commute.  This map of inverse systems gives a surjection of rings
$\gamma$ from $E$ to $\invlim\tilde{E}_N$.

We note now since $\rho_{N}(R)\subseteq R_N$ for each $N$, that
$R\subseteq \ker(\gamma)$.  Hence, we can factor out $R$ to obtain a
surjection from $E/R=\tilde{E}$ to $\invlim\tilde{E}_N$.  But this
is now a surjection of division rings and hence an isomorphism of
fields, as required.
\end{proof}

If $F$ is a finite group, recall that a $kF$-module $W$ is said to
be \emph{absolutely indecomposable} if the $k'F$-module $k'\otimes_k
W$ is indecomposable for all field extensions $k'$ of $k$.  By
\cite[30.29]{curtisandreiner1}, $W$ is absolutely indecomposable if
and only if $\tilde{E}(W)\iso k$.  We thus have the following immediate
corollary to \ref{End by rad inverse limit}:

\begin{corol}\label{coinv abs indec}
If $G$ is a virtually pro-$p$ group and $U$ is a finitely generated
$k\db{G}$-module with corresponding $\tilde{E}\iso k$, then $U$ is
the inverse limit of an inverse system of finite absolutely
indecomposable modules.
\end{corol}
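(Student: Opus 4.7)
The plan is to simply assemble the pieces already established. Recall that we have fixed a cofinal inverse system of $N \lhd_O G$ in which every $U_N$ is indecomposable (from \ref{U_N indec} and \ref{coinv non vanishing}), and then we passed to a further cofinal subsystem on which the finite field $\tilde{E}_N$ is a constant value $k'$, a fixed finite extension of $k$. The hypothesis is that $\tilde{E} \iso k$.

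First I would invoke Lemma \ref{End by rad inverse limit}, which gives $\tilde{E} \iso \invlim \tilde{E}_N$. Inside our cofinal system each $\tilde{E}_N$ equals $k'$ and the transition maps $\tilde{\rho}_{MN}$ are (necessarily injective) field homomorphisms between copies of the same finite field, hence isomorphisms; so $\invlim \tilde{E}_N \iso k'$. Combined with the hypothesis $\tilde{E} \iso k$ this forces $k' = k$, i.e.\ $\tilde{E}_N = \tilde{E}(U_N) \iso k$ for every $N$ in our cofinal system.

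Next I would apply the criterion \cite[30.29]{curtisandreiner1}: for the finite group $G/N$ (which is where $U_N$ lives, since $N$ acts trivially on $U_N$) and the finite module $U_N$, the condition $\tilde{E}(U_N) \iso k$ is exactly the definition of absolute indecomposability. So each $U_N$ in our cofinal system is a finite absolutely indecomposable $k\db{G/N}$-module, equivalently a finite absolutely indecomposable $k\db{G}$-module.

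Finally, by Proposition \ref{module invlimit of coinvariants}, $U$ is the inverse limit of the $U_N$ over all $N \lhd_O G$, and restriction to a cofinal subsystem does not change the limit by \cite[1.1.9]{ribzal}. This exhibits $U$ as the inverse limit of an inverse system of finite absolutely indecomposable modules, as required. There is no real obstacle here; the content was all packed into Lemma \ref{End by rad inverse limit} and the preparatory work on cofinal systems.
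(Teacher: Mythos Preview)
Your argument is correct and is exactly the reasoning the paper intends: it states the result as an immediate corollary of Lemma \ref{End by rad inverse limit} together with \cite[30.29]{curtisandreiner1}, and you have simply spelled out why the hypothesis $\tilde{E}\iso k$ forces $k'=k$ and hence $\tilde{E}(U_N)\iso k$ along the chosen cofinal system. There is nothing to add.
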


From \cite[7.14, 3.34, 30.27]{curtisandreiner1} we can make several
important deductions.  Firstly if $F$ is a finite group and $W$ is a
finitely generated $kF$-module, then $W$ is absolutely
indecomposable if and only if $k'\otimes_k W$ is indecomposable for
all \emph{finite} field extensions $k'$ of $k$.  Secondly, if $W$ is
not absolutely indecomposable then the extension $l$ of the field
$k$ required for $l\otimes_k W$ to decompose does not depend
directly on $F$ or $W$, but only on the field $\tilde{E}(W)$.  These
facts ensure that the following definition is appropriate:

\begin{defn}\label{profinite abs indec defn}
A finitely generated profinite $k\db{G}$-module $U$ is
\emph{absolutely indecomposable} if the $k'\db{G}$-module
$k'U=k'\ctens_k U$ is indecomposable for all finite field extensions
$k'$ of $k$.
\end{defn}

\begin{theorem}\label{profinite absindec iff End/rad is k}
If $G$ is a virtually pro-$p$ group, then a finitely generated
$k\db{G}$-module $U$ is absolutely indecomposable if and only if
$\tilde{E}\iso k$.
\end{theorem}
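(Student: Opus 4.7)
The plan is to prove both implications by reducing indecomposability of $k'\ctens_k U$ (for $k'$ a finite extension of $k$) to indecomposability of its coinvariants, and then applying the finite-group criterion \cite[30.29]{curtisandreiner1} that characterizes absolute indecomposability of a finite $kF$-module $W$ by $\tilde{E}(W)\iso k$. The bridge between the two pictures is the natural identification $(k'\ctens_k U)_N \iso k'\otimes_k U_N$ of $k'\db{G/N}$-modules for every $N\lhd_O G$: since $N$ acts trivially on $k'$, taking $N$-coinvariants only affects the second factor, and both tensor products are in fact abstract because $k'$ and $U_N$ are finite.

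For the forward implication, suppose $\tilde{E}\iso k$. Recall that on the cofinal system of $N\lhd_O G$ fixed just before \ref{End by rad inverse limit}, every $\tilde{E}_N$ equals the same finite extension of $k$; by \ref{End by rad inverse limit} this common field is $\tilde{E}\iso k$, so $\tilde{E}_N\iso k$ for each $N$ in the system. The finite criterion then gives that every $U_N$ is absolutely indecomposable, so $(k'\ctens_k U)_N\iso k'\otimes_k U_N$ is indecomposable for every $N$ in the system and every finite extension $k'/k$. If $k'\ctens_k U$ were to split as $A\oplus B$ with $A,B$ non-zero $k'\db{G}$-modules, taking coinvariants would give $(k'\ctens_k U)_N\iso A_N\oplus B_N$; since $G$ is virtually pro-$p$ we may take $N$ pro-$p$, and Lemma \ref{coinv non vanishing} applied to $A$ and $B$ forces $A_N,B_N\neq 0$, contradicting indecomposability of $(k'\ctens_k U)_N$. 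Hence $k'\ctens_k U$ is indecomposable for every finite extension $k'/k$, so $U$ is absolutely indecomposable.

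For the converse I would argue by contrapositive. If $\tilde{E}\not\iso k$, then on our fixed cofinal system every $\tilde{E}_N$ equals the proper finite extension $\tilde{E}$ of $k$; call this field $k'$. Then no $U_N$ is absolutely indecomposable, so $k'\otimes_k U_N\iso(k'\ctens_k U)_N$ is decomposable for every $N$ in the system. However $k'\ctens_k U$ is a finitely generated $k'\db{G}$-module over the virtually pro-$p$ group $G$, so were it indecomposable Proposition \ref{U_N indec} would force $(k'\ctens_k U)_N$ to be indecomposable for all sufficiently small $N$, contradicting the previous sentence. Thus $k'\ctens_k U$ is decomposable and $U$ is not absolutely indecomposable.

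The main obstacle is verifying the identification $(k'\ctens_k U)_N\iso k'\otimes_k U_N$ as $k'\db{G/N}$-modules; while essentially formal, this requires a careful check that the $k'$-action, the $G/N$-action, and the completed-versus-abstract tensor product subtleties all fit together correctly. Once it is in hand the argument reduces cleanly to the finite-group theory and the coinvariant machinery developed in Section \ref{prelims}.
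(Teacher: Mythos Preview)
Your proof is correct and follows essentially the same route as the paper's: both directions reduce to the finite-group criterion via the identification $(k'\ctens_k U)_N\iso k'\otimes_k U_N$, invoking \ref{coinv non vanishing} for the forward implication and the contrapositive of \ref{U_N indec} for the converse. The only cosmetic differences are that in the converse the paper uses an auxiliary extension $l$ (depending only on $\tilde{E}$, via the discussion preceding \ref{profinite abs indec defn}) rather than taking $l=\tilde{E}$ directly, and phrases the bridge identification in the inverse-limit form $l\ctens_k U\iso\invlim(l\otimes_k U_N)$.
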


\begin{proof}
If $\tilde{E}\iso k$ then by \ref{coinv abs indec}, $U$ is the inverse limit of a cofinal inverse
system of absolutely indecomposable modules $U_N$. Suppose that
$k'\otimes_k U$ decomposes as $X\oplus Y$ for some finite extension
field $k'$ of $k$ and some $X,Y\neq 0$. Then
$$k'\otimes_k U_N\iso (k'\otimes_k U)_N\iso (X\oplus Y)_N\iso X_N\oplus Y_N.$$
But $X_N$ and $Y_N$ are non-zero since $N$ is pro-$p$, by \ref{coinv
non vanishing}, contradicting the absolute indecomposability of
$U_N$.

To show the forward implication, assume that $\tilde{E}=k'$ for $k'$ a finite field extension of
$k$ which properly contains $k$.  Since $\tilde{E}\iso\invlim
\tilde{E}_N$ we have a cofinal inverse system of modules $U_N$ for
which $\tilde{E}_N=k'$.

By the discussion prior to \ref{profinite abs indec defn} there is a
fixed finite extension field $l$ of $k$ for which each $l\ctens_k
U_N$ decomposes.  But
$$\invlim (l\ctens_k U_N)\iso l\ctens_k \invlim U_N=l\ctens_k U$$
since by \cite[5.5.2]{ribzal} complete tensoring commutes with
$\invlim$ and the actions of $l$ and $G$ carry through this
isomorphism. Now the contrapositive of \ref{U_N indec} demonstrates
that $l\ctens U$ decomposes, so that $U$ is not absolutely
indecomposable.
\end{proof}

We can now prove Green's indecomposability theorem for virtually
pro-$p$ groups:

\begin{theorem}\label{profinite indecomposability}
Let $G$ be a virtually pro-$p$ group, let $H\lhd_C G$ with $G/H$ a
pro-$p$ group, and let $V$ be a finitely generated absolutely
indecomposable $k\db{H}$-module.  Then $V\ind{G}$ is absolutely
indecomposable.
\end{theorem}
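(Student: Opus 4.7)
The plan is to reduce to the classical (finite group) form of Green's indecomposability theorem using the coinvariant machinery.  By \ref{profinite absindec iff End/rad is k} it suffices to show that $V\ind{G}$ is indecomposable and that $\tilde{E}(V\ind{G})\iso k$.  The crucial input is \ref{coinvariant technicals}(4), which gives the isomorphism $(V\ind{G})_N\iso V_{H\cap N}\ind{G/N}$ for every $N\lhd_O G$.  The finite group $G/N$ has $HN/N$ as a normal subgroup with quotient $G/HN$ a finite $p$-group (being a finite quotient of the pro-$p$ group $G/H$), so whenever $V_{H\cap N}$ is absolutely indecomposable as a $k[HN/N]$-module, the classical finite form of Green's theorem yields that $V_{H\cap N}\ind{G/N}$ is absolutely indecomposable over $k[G/N]$.

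The key step is therefore to arrange absolute indecomposability of $V_{H\cap N}$ for $N$ in a cofinal system of open normal subgroups of $G$.  I would first check that $\{H\cap N\,|\,N\lhd_O G\}$ is cofinal in the open normal subgroups of $H$: it is a filter base of closed normal subgroups of $H$ whose intersection is trivial, so a compactness argument like the one in \ref{projective relative to intersection} shows that any $M\lhd_O H$ contains some $H\cap N$.  Consequently $V\iso \invlim_N V_{H\cap N}$, and the reasoning of \ref{End by rad inverse limit} applied to $V$ as a $k\db{H}$-module (note that $H$ is itself virtually pro-$p$) gives $\tilde{E}(V)\iso \invlim_N\tilde{E}(V_{H\cap N})$ together with a cofinal subsystem on which the finite fields $\tilde{E}(V_{H\cap N})$ all equal a fixed finite extension $k'$ of $k$.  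Since $V$ is absolutely indecomposable, \ref{profinite absindec iff End/rad is k} forces $\tilde{E}(V)\iso k$, so $k'=k$; that is, $V_{H\cap N}$ is absolutely indecomposable for $N$ in this cofinal subsystem.  Shrinking further so that each such $N$ lies in an open normal pro-$p$ subgroup of $G$, we may also assume each $N$ is pro-$p$.

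For $N$ in the chosen cofinal system, $(V\ind{G})_N\iso V_{H\cap N}\ind{G/N}$ is absolutely indecomposable by classical Green, hence indecomposable with endomorphism ring modulo radical isomorphic to $k$.  Indecomposability of these cofinal coinvariants forces $V\ind{G}$ itself to be indecomposable, since any non-trivial decomposition $V\ind{G}=X\oplus Y$ would descend, by \ref{coinv non vanishing} applied to our pro-$p$ $N$'s, to a non-trivial decomposition of every $(V\ind{G})_N$.  With indecomposability established, \ref{End by rad inverse limit} yields $\tilde{E}(V\ind{G})\iso \invlim_N\tilde{E}((V\ind{G})_N)\iso \invlim_N k\iso k$, and \ref{profinite absindec iff End/rad is k} completes the proof.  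The main technical obstacle is the middle paragraph: ensuring that absolute indecomposability genuinely descends from $V$ to $V_{H\cap N}$ for cofinally many $N\lhd_O G$, rather than merely for cofinally many open normal subgroups of $H$.
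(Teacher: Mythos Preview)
Your proposal is correct and follows essentially the same route as the paper: reduce to the finite form of Green's theorem via the identification $(V\ind{G})_N\iso V_{H\cap N}\ind{G/N}$, use absolute indecomposability of $V$ to get $V_{H\cap N}$ absolutely indecomposable for a cofinal system of $N$, apply finite Green, and then read off $\tilde{E}(V\ind{G})\iso k$ from \ref{End by rad inverse limit}. The only real difference is that you make explicit the cofinality of $\{H\cap N\,|\,N\lhd_O G\}$ in the open normal subgroups of $H$, which the paper treats as immediate when it invokes \ref{coinv abs indec}; your worry about this step is well placed but your resolution of it is exactly right.
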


\begin{proof}
Suppose for contradiction that $V\ind{G}$ decomposes, so that
$V\ind{G}=X\oplus Y$ for $k\db{G}$-modules $X,Y\neq 0$.  By
\ref{profinite absindec iff End/rad is k} the module $V$ has
corresponding $\tilde{E}(V)=k$, so by \ref{coinv abs indec} we can
find some open normal pro-$p$ subgroup $N$ of $G$ with $V_{H\cap N}$
absolutely indecomposable. Then
$$V_{H\cap N}\ind{G/N} \iso (V\ind{G})_N = (X\oplus Y)_N \iso X_N\oplus Y_N$$
where $X_N, Y_N\neq 0$ by \ref{coinv non vanishing}, so that
$V_{H\cap N}\ind{G/N}$ decomposes.  But this decomposition
contradicts Green's indecomposability theorem for finite groups
\cite[19.23]{curtisandreiner1}, and so $V\ind{G}$ must be
indecomposable.

For absolute indecomposability note that there is a cofinal inverse
system of $N\lhd_O G$ for which $\tilde{E}(V_{H\cap N}\ind{G/N})\iso
k$.  But $V_{H\cap N}\ind{G/N}\iso (V\ind{G})_N$ so that
$\tilde{E}((V\ind{G})_N)\iso k$ for each $N$.  Now $V\ind{G}$ is
absolutely indecomposable by \ref{End by rad inverse limit} and
\ref{profinite absindec iff End/rad is k}.
\end{proof}

As for finite groups we have immediate corollaries:

\begin{corol}
Let $G$ be a virtually pro-$p$ group, let $H\leq_C G$ be subnormal
in $G$ with $|G:H|$ a (possibly infinite) power of $p$, and let $V$
be a finitely generated absolutely indecomposable $k\db{H}$-module.
Then $V\ind{G}$ is absolutely indecomposable.
\end{corol}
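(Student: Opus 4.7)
The plan is to reduce to the normal case already proved in Theorem \ref{profinite indecomposability} by unwinding the subnormal chain and iteratively applying that theorem via transitivity of induction.

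First, since $H$ is subnormal in $G$, I would fix a finite chain of closed subgroups
$$H=H_0\lhd H_1\lhd\hdots\lhd H_n=G,$$
each normal in the next. I then need to verify two things about this chain. One: each $H_i$ is virtually pro-$p$. This is immediate because $H_i$ is a closed subgroup of $G$, and if $P\lhd_O G$ is pro-$p$ then $H_i\cap P$ is an open normal pro-$p$ subgroup of $H_i$. Two: each quotient $H_{i+1}/H_i$ is pro-$p$. Because supernatural indices are multiplicative along the chain, $|H_{i+1}:H_i|$ divides $|G:H|$, which is a power of $p$ by hypothesis, so $H_{i+1}/H_i$ is pro-$p$.

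Next, I would proceed by induction on the length $n$ of the chain. The base case $n=0$ is trivial. For the inductive step, the inductive hypothesis applied to the chain $H_0\lhd\hdots\lhd H_{n-1}$ (inside the virtually pro-$p$ group $H_{n-1}$) gives that $V\ind{H_{n-1}}$ is absolutely indecomposable. Then $H_{n-1}\lhd_C G$ with $G/H_{n-1}$ pro-$p$ and $G$ virtually pro-$p$, so Theorem \ref{profinite indecomposability} applies to the finitely generated absolutely indecomposable $k\db{H_{n-1}}$-module $V\ind{H_{n-1}}$ and yields that $(V\ind{H_{n-1}})\ind{G}$ is absolutely indecomposable. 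By transitivity of induction (for completed tensor products over $k\db{-}$, using associativity of $\ctens$), this module is isomorphic to $V\ind{G}$, which finishes the induction.

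The only thing that requires any care beyond bookkeeping is confirming that Theorem \ref{profinite indecomposability} can be applied at each step: one must check that $V\ind{H_{n-1}}$ remains finitely generated (which follows because $V$ is finitely generated and $H_{n-1}/H$ has power-of-$p$ supernatural order, but in any case induction preserves the property of being generated by finitely many elements once those generators are fixed), and that the ambient group at each step is virtually pro-$p$. Both observations are straightforward, so no genuine obstacle arises; the corollary is really just the normal-quotient theorem plus transitivity of induction along the subnormal chain.
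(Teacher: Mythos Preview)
Your proposal is correct and is exactly the argument the paper has in mind: the corollary is listed as an ``immediate'' consequence, and the intended proof is precisely to walk up a finite subnormal chain applying Theorem \ref{profinite indecomposability} at each step together with transitivity of induction. One minor remark: the fact that $V\ind{H_{n-1}}$ is finitely generated has nothing to do with $|H_{n-1}:H|$ being a power of $p$; your second justification (images of a finite generating set under $v\mapsto 1\ctens v$ generate the induced module) is the correct one, so you can drop the first parenthetical.
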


\begin{corol}
Let $G$ be a pro-$p$ group, let $H\leq_C G$, and let $V$ be a
finitely generated absolutely indecomposable $k\db{H}$-module. Then
$V\ind{G}$ is absolutely indecomposable.
\end{corol}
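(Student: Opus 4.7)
The plan is to repeat the argument of Theorem \ref{profinite indecomposability} almost verbatim, observing that the normality hypothesis ``$H \lhd_C G$ with $G/H$ pro-$p$'' was used there only to guarantee that after reducing modulo an open normal pro-$p$ subgroup $N$, the subgroup $HN/N$ is normal in $G/N$ of $p$-power index. When $G$ is itself pro-$p$, the quotient $G/N$ is a \emph{finite} $p$-group, so \emph{every} subgroup of $G/N$ --- in particular $HN/N$ --- is automatically subnormal of $p$-power index, whether or not $H$ is normal in $G$. This means the preceding corollary cannot obviously be invoked (a general closed subgroup of a pro-$p$ group need not admit a finite subnormal chain in $G$), but the proof strategy of \ref{profinite indecomposability} still applies.

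Concretely, suppose for contradiction that $V\ind{G} = X \oplus Y$ with $X, Y \neq 0$. Since $V$ is absolutely indecomposable, Theorem \ref{profinite absindec iff End/rad is k} gives $\tilde{E}(V) \iso k$, and Corollary \ref{coinv abs indec} then furnishes a cofinal inverse system of $N \lhd_O G$ for which $V_{H \cap N}$ is a finite absolutely indecomposable $k\db{HN/N}$-module. For any such $N$, Lemma \ref{coinvariant technicals}(4) gives $(V\ind{G})_N \iso V_{H \cap N}\ind{G/N}$, while the direct sum decomposition gives $(V\ind{G})_N = X_N \oplus Y_N$ with $X_N, Y_N \neq 0$ by Lemma \ref{coinv non vanishing}. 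Hence $V_{H \cap N}\ind{G/N}$ would decompose, contradicting the finite Green's indecomposability theorem applied inside the finite $p$-group $G/N$ (where $HN/N$ is automatically subnormal of $p$-power index).

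Absolute indecomposability is then handled exactly as in the last paragraph of Theorem \ref{profinite indecomposability}: on the cofinal system above, $V_{H \cap N}\ind{G/N} \iso (V\ind{G})_N$ has $\tilde{E} \iso k$, so Lemma \ref{End by rad inverse limit} yields $\tilde{E}(V\ind{G}) \iso k$ and Theorem \ref{profinite absindec iff End/rad is k} completes the argument. There is no substantial obstacle here; the whole point is that passing to open normal quotients of a pro-$p$ group delivers finite $p$-groups for free, inside which all subgroups are subnormal, so the hypothesis on the position of $H$ in $G$ becomes superfluous.
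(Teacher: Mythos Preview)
Your proposal is correct and is exactly the paper's approach: the paper's entire proof is the single sentence ``Each $G/N$ is a finite $p$-group so that $HN/N$ is subnormal in $G/N$ and the result follows as above,'' and you have simply unpacked what ``as above'' means by rerunning the argument of Theorem~\ref{profinite indecomposability} with the observation that subnormality of $HN/N$ comes for free in a finite $p$-group. Your remark that the previous (subnormal) corollary cannot be invoked directly, since a closed subgroup of a pro-$p$ group need not be subnormal via a finite chain, is a valid and useful clarification.
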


\begin{proof}
Each $G/N$ is a finite $p$-group so that $HN/N$ is subnormal in
$G/N$ and the result follows as above.
\end{proof}

We include a virtually pro-$p$ version of a variant of Green's
indecomposability theorem (for the finite case see
\cite{inducedsummandsisomorphic}, \cite{broueinducedsummands} or
\cite{puiginducedsummands}).  It seems a pity that this result is
not widely known for finite groups.

\begin{theorem}\label{profinite inducedsummandsisomorphic}
Let $H$ be a closed subgroup of a virtually pro-$p$ group $G$ and
let $V$ be a finitely generated indecomposable $k\db{H}$-module. If
either $H$ is subnormal in $G$ and of index some (possibly infinite)
power of $p$, or $G$ is pro-$p$, then the indecomposable summands of
$V\ind{G}$ are isomorphic.
\end{theorem}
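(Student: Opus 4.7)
The plan is to reduce to the finite-group variant of Green's indecomposability theorem via coinvariants, mirroring the strategy used for Theorem \ref{profinite indecomposability}. Let $X,Y$ be any two indecomposable summands of $V\ind{G}$; both are finitely generated since $V\ind{G}$ is, and I will show $X\iso Y$.

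First I pass to a cofinal inverse system of open normal pro-$p$ subgroups $N\lhd_O G$ (available since $G$ is virtually pro-$p$) with the additional property that $V_{H\cap N}$, $X_N$ and $Y_N$ are all indecomposable. For $X_N$ and $Y_N$ this follows directly from Proposition \ref{U_N indec} over $G$; for $V_{H\cap N}$ it follows from Proposition \ref{U_N indec} applied to $V$ over the virtually pro-$p$ group $H$, using that the $H\cap N$ are cofinal among the open normal pro-$p$ subgroups of $H$ as $N$ varies over open normal pro-$p$ subgroups of $G$. Lemma \ref{coinv non vanishing} ensures that all three modules are non-zero, and Lemma \ref{coinvariant technicals}(4) gives
\[
(V\ind{G})_N \iso V_{H\cap N}\ind{G/N};
\]
since $X$ and $Y$ are summands of $V\ind{G}$, their coinvariants $X_N$ and $Y_N$ are indecomposable summands of this induced module.

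Next I verify that in both hypothesis cases the finite-group variant of Green's indecomposability theorem applies to the induction from $k[HN/N]$ to $k[G/N]$. If $G$ is pro-$p$ then $G/N$ is a finite $p$-group, so every subgroup is subnormal of $p$-power index. If instead $H$ is subnormal in $G$ of (possibly infinite) $p$-power index, a subnormal series for $H$ in $G$ projects to one for $HN/N$ in $G/N$, and $[G/N:HN/N]=[G:HN]$ divides $[G:H]$ as a supernatural number and is finite since $N$ is open, hence is a finite $p$-power. The finite variant (cited before the theorem) then guarantees that all indecomposable summands of $V_{H\cap N}\ind{G/N}$ are mutually isomorphic, and in particular $X_N\iso Y_N$.

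Since $X_N\iso Y_N$ for every $N$ in our cofinal system, the second part of Lemma \ref{fg summands of coinvs are summands} yields $X\iso Y$, as required. The only real technical obstacle is the bookkeeping involved in combining several cofinality conditions on $N$ into one cofinal system; since each condition individually holds on a cofinal subsystem of open normal pro-$p$ subgroups and these are closed under finite intersection, this is routine.
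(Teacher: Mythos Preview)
Your proof is correct and follows essentially the same approach as the paper: reduce to coinvariants so that the finite-group variant of Green's theorem applies, then lift the isomorphism back using Lemma~\ref{fg summands of coinvs are summands}. You supply a bit more detail than the paper does---in particular the explicit verification that $HN/N$ is subnormal of $p$-power index in $G/N$ under each hypothesis, and the justification that the $H\cap N$ form a cofinal system of open normal pro-$p$ subgroups of $H$---but the architecture of the argument is identical.
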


\begin{proof}
If the module $V\ind{G}$ is indecomposable then we are done.
Otherwise write $V\ind{G}=X\oplus Y\oplus Z$ with $X,Y$ non-zero and
indecomposable. We will show that $X\iso Y$.

Choose a cofinal inverse system of open normal pro-$p$ subgroups $N$
of $G$ so that $V$ itself and the indecomposable summands of
$V\ind{G}$ remain indecomposable on taking coinvariants.  Now for
any such $N$ we have
$$V_{H\cap N}\ind{G/N}\iso (V\ind{G})_N\iso X_N\oplus Y_N\oplus Z_N.$$
But $V_{H\cap N}$ is a finitely generated indecomposable module over
the finite group $HN/N$, and under either hypothesis given above we
have $X_N\iso Y_N$ by \cite{inducedsummandsisomorphic}.  It now
follows immediately from \ref{fg summands of coinvs are summands}
that $X\iso Y$ and we are done.
\end{proof}

\section{Acknowledgements}

The author gratefully acknowledges the support of his
supervisor Peter Symonds throughout this project.  Thanks also to the referee for helpful comments.

\bibliographystyle{plain}
\bibliography{bibliography}

\end{document}